\journalname{Noname}
\newcommand{\squareop}[1]{%
    \setlength{\fboxsep}{0pt}%
    \setlength{\unitlength}{.7em}%
    \mathrel{%
        \raisebox{-1pt}{\framebox(1,1){\(\scriptstyle #1\)}}%
    }%
}
\newcommand{\overbar}[1]{\mkern 1.5mu\overline{\mkern-1.5mu#1\mkern-1.5mu}\mkern 1.5mu}
\newcommand{\cut}[1]{{}}
\newcommand{\vb}{{\mathbf{b}}}
\newcommand{\vq}{{\mathbf{q}}}
\newcommand{\vs}{{\mathbf{s}}}
\newcommand{\vv}{{\mathbf{v}}}
\newcommand{\vx}{{\mathbf{x}}}
\newcommand{\vy}{{\mathbf{y}}}
\newcommand{\vz}{{\mathbf{z}}}
\newcommand{\vA}{{\mathbf{A}}}
\newcommand{\vB}{{\mathbf{B}}}
\newcommand{\vI}{{\mathbf{I}}}
\newcommand{\vK}{{\mathbf{K}}}
\newcommand{\vM}{{\mathbf{M}}}
\newcommand{\vN}{{\mathbf{N}}}
\newcommand{\RR}{\mathbb{R}}
\newcommand{\vzero}{\mathbf{0}}
\newcommand{\st}{{\text{s.t.}}} 
\newcommand{\prox}{\mathbf{prox}}
\mathchardef\mhyphen="2D
\let\@@span\span
\def\sp@n{\@@span\omit\advance\@multicnt\m@ne}
\DeclareMathOperator*{\Min}{minimize}
\DeclarePairedDelimiter{\dotp}{\langle}{\rangle}
\newcommand{\bc}{\begin{center}}
\newcommand{\ec}{\end{center}}
\newcommand{\bdm}{\begin{displaymath}}
\newcommand{\edm}{\end{displaymath}}
\newcommand{\beq}{\begin{equation}}
\newcommand{\eeq}{\end{equation}}
\newcommand{\bfl}{\begin{flushleft}}
\newcommand{\efl}{\end{flushleft}}
\newcommand{\bt}{\begin{tabbing}}
\newcommand{\et}{\end{tabbing}}
\newcommand{\beqn}{\begin{align}}
\newcommand{\eeqn}{\end{align}}
\newcommand{\beqs}{\begin{align*}} 
\newcommand{\eeqs}{\end{align*}}  
\begin{document}

\title{On the improved conditions for some primal-dual algorithms}


\author{Yao Li \and Ming Yan}

\institute{ Yao Li \at Michigan State University\\
East Lansing, MI 48824, USA\\
liyao6@msu.edu
\and
Ming Yan \at Michigan State University\\
East Lansing, MI 48824, USA\\
myan@msu.edu
}

\date{Received: date / Accepted: date}

\maketitle

\begin{abstract}
The convex minimization of $f(\vx)+g(\vx)+h(\vA\vx)$ over $\RR^n$ with differentiable $f$ and linear operator $\vA: \RR^n\rightarrow \RR^m$, has been well-studied in the literature. By considering the primal-dual optimality of the problem, many algorithms are proposed from different perspectives such as monotone operator scheme and fixed point theory. In this paper, we start with a base algorithm to reveal the connection between several algorithms such as AFBA, PD3O and Chambolle-Pock. Then, we prove its convergence under a relaxed assumption associated with the linear operator and characterize the general constraint on primal and dual stepsizes. The result improves the upper bound of stepsizes of AFBA and indicates that Chambolle-Pock, as the special case of the base algorithm when $f=0$, can take the stepsize of the dual iteration up to $4/3$ of the previously proven one. 

\end{abstract}
\keywords{Primal-dual \and  Asymmetric Forward–Backward-Adjoint splitting \and Primal–Dual Three-Operator splitting \and Chambolle-Pock}


\section{Introduction}\label{sec:intro}
We consider the following minimization problem in the form of the sum of three functions
\begin{equation}\label{pb1}
    \Min_{
    \vx\in\RR^n}\ f(\vx)+g(\vx)+h(\vA\vx),
\end{equation}
where $f$ is a differentiable convex function with $L$-Lipschitz continuous gradient and $g$, $h$ are proper, closed and convex functions taking values in $(-\infty, \infty].$ The third function $h$ is composited with a linear operator $\vA\in \RR^{n\times m}$, whose largest singular value is $\sigma$, i.e., $\sigma=\sqrt{\|\vA\vA^\top\|}$. 

We use the asterisk superscript to denote the Legendre-Fenchel conjugate function, e.g.,  $h^*:\RR^m\rightarrow\RR$ is defined as
\begin{equation*}
    h^*(\vs)=\sup_{\vx\in\RR^m}\ \dotp{\vs,\vx}-h(\vx).
\end{equation*}
Then, the saddle-point form of problem~\eqref{pb1} is
\begin{equation}\label{pb2}
    \min_{\vx\in \RR^n}\max_{\vs\in\RR^m}\ f(\vx)+g(\vx)+\dotp{\vA\vx,\vs}-h^*(\vs)
\end{equation}
where $\vx$ and $\vs$ are primal and dual variables, respectively. With the existence of a solution pair $(\vx^*,\vs^*)$, the first-order optimal condition is characterized as
\begin{equation*}
    \left\{\begin{aligned}
    & 0\in\nabla f(\vx^*)+\partial g(\vx^*)+\vA^\top\vs^*,\\
    & 0\in\partial h^*(\vs^*)-\vA\vx^*,
    \end{aligned}\right.
\end{equation*}
where $\partial g(\vx)=\{\vv\in\RR^n\ |\ g(\vy)-g(\vx)\geq\dotp{\vv,\vy-\vx}, \forall \vy \in\RR^n\}$ is the subdifferential of $g$ at $\vx$. The duality theorem~\cite[Theorem 15]{rockafellar1974conjugate} asserts that $\vx^*$ is the solution of the problem~\eqref{pb1} and $\vs^*$ is the solution of the corresponding dual problem 
\begin{equation}\label{pb2: dual}
    \Min_{
    \vs\in\RR^m}\ (f^*\squareop{} g^*)(-\vA^\top\vs)+h^*(\vs),
\end{equation}
where $f^*\squareop{} g^*$ is the infimal convolution of $f^*$ and $g^*$ that is defined as $f^*\squareop{} g^*(\vx)=\inf_{\vy\in\RR^n} f^*(\vy)+g^*(\vx-\vy)$ with the property that $(f^*\squareop{}g^*)^*=f+g.$ Note that when $f=0$ (or $g=0$), the infimal convolution $f^*\squareop{}g^*$ boils down to $g^*$ (or $f^*$).

Existing primal-dual algorithms to solve the above saddle-point problem~\eqref{pb2} include Condat-Vu~\cite{condat2013primal,vu2013splitting},  Primal–Dual Fixed-Point algorithm(PDFP)~\cite{chen2016primal}, Asymmetric Forward–Backward-Adjoint splitting(AFBA)~\cite{latafat2017asymmetric} and Primal–Dual Three-Operator splitting(PD3O)~\cite{yan2018new}. When $f=0$, Condat-Vu and PD3O are reduced to Chambolle-Pock~\cite{chambolle2011first}. When $g=0,$ all algorithms except Condat-Vu are reduced to Proximal Alternating Predictor–Corrector(PAPC) or Primal–Dual Fixed-Point algorithm based on the Proximity Operator(PDFP$^2$O)~\cite{loris2011generalization,chen2013primal,drori2015simple}.
The conditions on their stepsizes to guarantee the convergence is associated with the singular value of the linear operator $\sigma$ and the Lipschitz constant $L$. With notations $\eta_{\text{p}}$ for the primal stepsize and $\eta_{\text{d}}$ for the dual stepsize, Table~\ref{table1} lists the conditions on stepsizes of the  aforementioned algorithms and their connections when either $f$ or $g$ vanishes.   
\begin{table}[!ht]
\centering
\begin{tabular}{lllllll}
\hline
                &  & $\eta_{\text{p}}, \eta_{\text{d}}$                                                      &  & $f=0$                  &  & $g=0$ \\ \hhline{=======}
Condat-Vu       &  & $\eta_{\text{p}}\eta_{\text{d}}\sigma^2+\eta_{\text{p}}L/2\leq 1$                                               &  & C-P(primal) &  &       \\
PDFP            &  & $\eta_{\text{p}}L/2<1, \eta_{\text{p}}\eta_{\text{d}}\sigma^2\leq 1$                                            &  &                        &  & PAPC  \\
AFBA            &  & $\eta_{\text{p}}\eta_{\text{d}}\sigma^2+\sqrt{\eta_{\text{p}}\eta_{\text{d}}}\sigma+\eta_{\text{p}}L\leq 2$ &  & C-P(dual)   &  & PAPC  \\
PD3O            &  & $\eta_{\text{p}}L/2<1, \eta_{\text{p}}\eta_{\text{d}}\sigma^2\leq 1$                                            &  & C-P(primal) &  & PAPC  \\ \hline
                &  &                                                                                         &  &                        &  &       \\ \hhline{=======}
Chambolle-Pock  &  & $\eta_{\text{p}}\eta_{\text{d}}\sigma^2\leq1$                                                    &  & \multicolumn{3}{c}{$f=0$}         \\
PAPC(PDFP$^2$O) &  & $\eta_{\text{p}}L/2<1$, $ \eta_{\text{p}}\eta_{\text{d}}\sigma^2\leq1$                                    &  & \multicolumn{3}{c}{$g=0$}         \\ \hline
\end{tabular}
\caption{The conditions on stepsizes for the primal-dual algorithms. C-P(primal) and C-P(dual) stand for Chambolle-Pock applied to the primal problem and the dual problem, respectively. In~\cite{li2021new}, PAPC can take a larger upper bound of $\eta_{\text{p}}\eta_{\text{d}}\sigma^2$ up to $4/3$.}
\label{table1}
\end{table}

The listed conditions in Table~\ref{table1} are all sufficient to guarantee the convergence of the associated algorithms, and some cannot be relaxed any further. For PAPC, the first constraint, $\eta_{\text{p}}L/2<1$, cannot be relaxed since it reduces to gradient descent method with stepsize $\eta_{\text{p}}$ when there is only one $f$, i.e., $h=0$. However, it turns out that the upper bound of $\eta_{\text{p}}\eta_{\text{d}}\sigma^2$ can be relaxed to $4/3$. In ~\cite{he2020optimal,li2021linear}, a special case of PAPC, linearized augumented Lagrangian method, and an application of PAPC on the decentralized optimization are shown to converge under the relaxed condition. The general convergence result of PAPC with relaxed condition is shown in~\cite{li2021new}. The $4/3$ bound is also shown to be tight in~\cite{he2020optimal,li2021new}. This result raises an open question, i.e., \textit{can the similar relaxation be applied to Chambolle-Pock?} The follow-up question is \textit{can this result be generalized to algorithms for the general problem~\eqref{pb1}?}. 

The first question is implicitly answered by some work. A generalized Chambolle-Pock is proposed in~\cite{he2021generalized} to achieve the relaxation and its equivalence to the canonical Chambolle-Pock is proved for the following primal-dual problem
\begin{equation*}
    \min_{\vx\in\RR^n}\max_{\vs\in\RR^m}\ g(\vx)+\dotp{\vA\vx,\vs}-\dotp{\vb,\vs}
\end{equation*}
which covers the general form~\eqref{pb2} with linear $h^*$. The necessity of the condition is also shown by a simple case.

In~\cite{he2020optimally}, a relaxed linearization parameter is considered in the linearized Alternating Direction Method of Multipliers(L-ADMM) so that a larger stepsize can be used in the linearized subproblem to converge faster. L-ADMM considers the following linearly constrained problem,
\begin{equation}\label{ladmm:primal}
    \begin{aligned}
    \Min_{\vx\in\RR^{n_1},\vy\in\RR^{n_2}}&\ \ \widetilde{f}(\vx)+\widetilde{g}(\vy)\\
    \st&\ \ \widetilde{\vA}\vx+\widetilde{\vB}\vy=\vb
    \end{aligned}
\end{equation}
where $\widetilde{f}:\RR^{n_1}\rightarrow\RR$, $  \widetilde{g}:\RR^{n_2}\rightarrow\RR$ are proper, closed and convex, $\widetilde{\vA}\in\RR^{m\times n_1}$,  $\widetilde{\vB}\in\RR^{m\times n_2}$ are two linear operators and $\vb\in\RR^m$.  When $\widetilde{f}=g^*$ and $\widetilde{g}=h^*$, the dual problem can be formulated via Lagrangian multiplier as
\begin{equation}\label{ladmm:dual}
    \Min_{\vs\in\RR^{m}}\ -\dotp{\vb,\vs}+g(\widetilde{\vA}^\top\vs)+h(\widetilde{\vB}^\top\vs).
\end{equation}

A special case of the above dual problem~\eqref{ladmm:dual} is $\vb=\vzero$, $\widetilde{\vA}=\vI$, and $\widetilde{\vB}^\top=\vA$. In this case, it reduces to the problem~\eqref{pb1} with $f=0.$ It can be easily shown that L-ADMM applied to this special case of the problem~\eqref{ladmm:primal} is equivalent to Chambolle-Pock applied to the corresponeding dual problem. Therefore, the result in~\cite{he2020optimally} suggests the improved condition on stepsizes of Chambolle-Pock, $\eta_{\text{p}}\eta_{\text{d}}\sigma^2<\frac{4}{3}.$ When $\vb\not=0$ in the special case, the result in~\cite{he2020optimally} also indicates that the relaxed conditon is extensible to the problem~\eqref{pb1} with a linear function $f$.

However, there is no result for general Lipschitz smooth $f$ in the literature. This work will address the second question for AFBA and PD3O by characterizing a more general upper bound for $\eta_{\text{p}}$ and $\eta_{\text{d}}$, and directly give an affirmative answer to the first question.

Throughout the rest of the paper, we assume that there exists a solution pair $(\vx^*,\vs^*)$ of the problem~\eqref{pb2}. 
 We use $(\vI+\lambda\partial g)^{-1}(\vx)$ to represent the proximal operator of $g$ at $\vx$, which is defined as
$$\prox_{\lambda g}(\vx)\coloneqq\mathop{\arg\min}_{\vy\in \RR^n}\ g(\vy)+\frac{1}{2\lambda}\|\vy-\vx\|^2.$$
We use $\dotp{\cdot,\cdot}$ and $\|\cdot\|$ as the standard inner product and norm defined over $\RR^n$, and denote $\|\cdot\|_\vM=\sqrt{\dotp{\cdot,\vM(\cdot)}}$ as the (semi)norm associated with the given positive (semi)definite matrix $\vM\in \RR^{n\times n}$.

The rest of this paper is organized as follows. We first derive a base algorithm that is later shown to be an equivalent form of AFBA and illustrate the connection between AFBA, PD3O and Chambolle-Pock in Section~\ref{sec:algo}. We then prove the convergence of the base algorithm under the relaxed condition associated with the matrix $\vA$ in Section~\ref{sec:conv}. In the next section, we numerically show the performance of the algorithms under the proved weaker condition. 

\section{The Base Algorithm}\label{sec:algo}
For simplicity, we set $\eta_{\text{p}}=r$ and $\eta_{\text{d}}=\lambda/r$. That is, the product of the primal and dual stepsizes is $\lambda$. The optimality condition of problem~\eqref{pb2} can be reformulated as the following monotone inclusion problem 
\begin{equation}\label{pb3}
  \begin{bmatrix}
         0\\
         0
  \end{bmatrix}\in\begin{bmatrix}
        \nabla f(\vx) \\
        0
      \end{bmatrix}+
                    \begin{bmatrix}
                                    \partial g & \vA^\top \\
                                    -\vA & \partial h^*
                                  \end{bmatrix}\begin{bmatrix}
                                                 \vx \\
                                                 \vs
                                               \end{bmatrix}.
\end{equation}
This problem is also equivalent to  
\begin{equation*}
    \begin{bmatrix}
        \vI&\\
        &\frac{r^2}{\lambda}\vI-r^2\vA\vA^\top
     \end{bmatrix}\begin{bmatrix}
        \vx\\
        \vs
     \end{bmatrix}\in\begin{bmatrix}
      r \nabla f(\vx)\\
      0
     \end{bmatrix}+\begin{bmatrix}
                                    \vI+r\partial g & r\vA^\top \\
                                    -r\vA & \frac{r^2}{\lambda}\vI-r^2\vA\vA^\top+r\partial h^*
                                  \end{bmatrix}\begin{bmatrix}
                                                 \vx \\
                                                 \vs
                                               \end{bmatrix}.
\end{equation*}

Let $\zeta\in \vx-r\nabla f(\vx)-r\partial g(\vx)$, then the above form can be decomposed as
\begin{subequations}\label{pb3:sys}
    \begin{empheq}[left=\empheqlbrace\ ]{align}
     \begin{bmatrix}
        \vI &\\
        & \frac{r^2}{\lambda}\vI-r^2\vA
        \vA^\top
     \end{bmatrix}\begin{bmatrix}
       \zeta\\
       \vs
     \end{bmatrix}\in\begin{bmatrix}
                                    \vI & r\vA^\top \\
                                    -r\vA & \frac{r^2}{\lambda}\vI-r^2\vA\vA^\top+r\partial h^*
                                  \end{bmatrix}&\begin{bmatrix}
                                                 \vx \\
                                                 \vs
                                               \end{bmatrix}, \label{pb3:4a}\\
    (\vI+r\partial g)^{-1} (2\vx-\zeta-r\nabla f(\vx))+\zeta-\vx =&\ \zeta .\label{pb3:4b}
    \end{empheq}
\end{subequations}

Plug $(\vx^{k+1}, \vs^{k+1},\zeta^{k+1})$ into the right-hand side of~\eqref{pb3:sys} and let the rest $\zeta, \vx,\vs$ on the left-hand side of~\eqref{pb3:sys} be the variables in $k$th iteration $(\vx^{k}, \vs^{k},\zeta^{k})$. Then, we can solve for the next-step variables of the system~\eqref{pb3:sys} in the $\vs\mhyphen\vx\mhyphen\zeta$ order by Gaussian elimination. The base algorithm follows
\begin{subequations}\label{algo}
 \begin{empheq}[left=\empheqlfloor\ ]{align}
  \vs^{k+1}&=(\vI+\frac{\lambda}{r}\partial h^*)^{-1}(\frac{\lambda}{r}\vA\zeta^k+(\vI-\lambda \vA\vA^\top)\vs^k),\label{iter:a}\\
  \vx^{k+1}&=\zeta^k-r\vA^\top \vs^{k+1},\label{iter:b}\\
  \zeta^{k+1}&=(\vI+r\partial g)^{-1}(\vx^{k+1}-r\vA^\top \vs ^{k+1}-r\nabla f(\vx^{k+1}))-\vx^{k+1}+\zeta^k,\label{iter:c}
  \end{empheq}
  \end{subequations}
  where $r$ and $\lambda/r$ are primal and dual stepsizes, respectively. 
  
  In Section~\ref{sec:conv}, we will show the convergence of the algorithm when $r<\frac{4\theta-3}{2\theta-1}\frac{2}{L}$ and $\lambda\leq\frac{1}{\theta \sigma^2}$ with any $\theta\in(3/4, 1].$ Notice that, when $\theta=1,$ the conditions on $r$ and $\lambda$ is reduced to  $r<\frac{2}{L}$ and $\lambda\leq\frac{1}{\sigma^2}$. As listed in Table~\ref{table1}, this condition is the sufficient condition on stepizes for PDFP shown in~\cite{chen2016primal} and PD3O shown in~\cite{yan2018new}, which is apparently weaker than that for Condat-Vu shown in~\cite{condat2013primal,vu2013splitting} and AFBA shown in~\cite{latafat2017asymmetric}. This general form of the upper bound gives a larger range of $\lambda$ and accordingly narrows the range of values of $r$.
  
  However, the compromise on $r$ will disappear when $f$ is linear ($L=0$). In this case, $r$ can take any value in $(0,\infty)$ and consequently, $\lambda$ can take value as large as $\frac{4}{3\sigma^2}$. We will later show in Section~\ref{connect:cp} that the base algorithm recovers Chambolle-Pock applied to the dual problem~\eqref{pb2: dual} with $f=0$. The range on $\lambda$ is significantly better than the  previously used for Chambolle-Pock, which is $\lambda\leq\frac{1}{\sigma^2}.$ Therefore, the first question is answered.
  
  Next, we will discuss the connection of the base algorithm with AFBA, PD3O ,Chambolle-Pock and PAPC(PDFP$^2$O). We say an algorithm is equivalent to another one if and only if we can find an one-one map between the sequences generated by two algorithms with appropriate initialization.
 \subsection{Connection with AFBA}\label{connect:afba}
  AFBA is a scheme for general monotone inclusion problem of three-operator splitting. The special form~\cite[Algorithm $5$]{latafat2017asymmetric} used to solve the problem~\eqref{pb2} is conducted as
  \begin{subequations}\label{afba}
 \begin{empheq}[left=\empheqlfloor\ ]{align}
  \vs_1^{k+1}&=(\vI+\frac{\lambda}{r}\partial h^*)^{-1}(\vs_1^k+\frac{\lambda}{r}\vA\overbar{\vx}_1^k),\label{afba:a}\\
  \vx_1^{k+1}&=\overbar{\vx}_1^k-r\vA^\top (\vs_1^{k+1}-\vs_1^k),\label{afba:b}\\
  \overbar{\vx}_1^{k+1}&=(\vI+r\partial g)^{-1}(\vx_1^{k+1}-r\vA^\top \vs_1^{k+1}-r\nabla f(\vx_1^{k+1}))\label{afba:c}.
  \end{empheq}
  \end{subequations}
  We will show that AFBA is equivalent to the base algorithm if the relation 
 \begin{equation}\label{base_afba}
     \begin{bmatrix}
            \vs_1^k\\
            \vx_1^k\\
            \overbar{\vx}_1^k
     \end{bmatrix}=\begin{bmatrix}
            \vs^k\\
            \vx^k\\
            \zeta^k-r\vA^\top\vs^k
     \end{bmatrix},
          \begin{bmatrix}
            \vs^k\\
            \vx^k\\
            \zeta^k
     \end{bmatrix}=\begin{bmatrix}
            \vs_1^k\\
            \vx_1^k\\
            \overbar{\vx}_1^k+r\vA^\top\vs_1^k
     \end{bmatrix}
 \end{equation}
 holds.

 Define $\zeta^{k+1}=\overbar{\vx}_1^{k+1}+r\vA^\top \vs_1^{k+1}$, then~\eqref{afba:a} and~\eqref{afba:b} are reformulated by canceling $\overbar{\vx}$ as
 $$
 \begin{aligned}
 \vs_1^{k+1}&=(\vI+\frac{\lambda}{r}\partial h^*)^{-1}(\vs_1^k+\frac{\lambda}{r}\overbar{\vx}_1^k)=(\vI+\frac{\lambda}{r}\partial h^*)^{-1}(\frac{\lambda}{r}\vA\zeta^k+(\vI-\lambda \vA\vA^\top)\vs_1^k),\\
 \vx_1^{k+1}&=\zeta^k-r\vA^\top \vs_1^k-r\vA^\top(\vs_1^{k+1}-\vs_1^k)=\zeta^k-r\vA^\top\vs_1^{k+1}.
 \end{aligned}$$
  The update in~\eqref{afba:c} gives the update of $\zeta$ as
\begin{equation*}
    \begin{aligned}
    \zeta^{k+1}&=\overbar{\vx}_1^{k+1}+r\vA^\top\vs_1^{k+1}\\
    &=\overbar{\vx}_1^{k+1}+\zeta^k-\vx_1^{k+1}\\
    &=(\vI+r\partial g)^{-1}(\vx_1^{k+1}-r\vA^\top \vs_1 ^{k+1}-r\nabla f(\vx_1^{k+1}))+\zeta^k-\vx_1^{k+1}.
    \end{aligned}
\end{equation*}

Hence, the sequence $\{(\vs_1^k, \vx_1^k)\}$ generated by AFBA coincides with the sequence $\{(\vs^k,\vx^k)\}$ generated by the base algorithm with the same initialization. The reverse direction can be verified by~\eqref{base_afba} in the same way, hence the base algorithm is equivalent to AFBA applied to the problem~\eqref{pb2}.
  
 From~\cite[Proposition $5.3.$]{latafat2017asymmetric}, $(\vx^k,\vs^k)$ generated by AFBA will converge to a saddle point solution $(\vx^*, \vs^*)$ when $\frac{\lambda\sigma^2}{2}+\frac{\sqrt{\lambda}\sigma}{2}+\frac{rL}{2}\leq 1.$ The theoretical result of the base algorithm improves the condition to
 $r\leq\frac{4\theta-3}{2\theta-1}\frac{2}{L}$ and $\lambda\leq\frac{1}{\theta \sigma^2}$ for any  $\theta\in(\frac{3}{4},1].$ Furthermore, when $\theta=1,$ the relaxed condition coincides with the one required for PDFP and PD3O.
  
 \subsection{Connection with PD3O} \label{connect:pd3o}
Applying PD3O to the dual problem~\eqref{pb2: dual}, we get the following iteration,
\begin{subequations}\label{pd3o}\small
 \begin{empheq}[left=\empheqlfloor\ ]{align}
  \vs_2^{k+1}&=(\vI+\frac{\lambda}{r}\partial h^*)^{-1}(\vz_2^k),\label{pd3o:a}\\
  \vx_2^{k+1}&=(\vI+r\partial g)^{-1}((\vI-\lambda\vA^\top\vA)\vx_2^k-r\nabla f(\vx_2^k)-r\vA^\top(2\vs_2^{k+1}-\vz_2^k)),\label{pd3o:b}\\
  \vz_2^{k+1}&=\vs_2^{k+1}+\frac{\lambda}{r}\vA\vx_2^{k+1}.\label{pd3o:c}
  \end{empheq}
\end{subequations}
We can easily verify that the above iteration is equivalent to AFBA if 
  \begin{equation}\label{afba_pd3o}
      \begin{bmatrix}
             \vs_2^k\\
             \vx_2^k\\
             \vz_2^k
      \end{bmatrix}=\begin{bmatrix}
             \vs_1^k\\
             \overbar{\vx}_1^k\\
             \vs_1^k+\frac{\lambda}{r}\vA\overbar{\vx}_1^k
      \end{bmatrix},       \begin{bmatrix}
             \vs_1^k\\
             \vx_1^k\\
             \overbar{\vx}_1^k
      \end{bmatrix}=\begin{bmatrix}
             \vs_2^k\\
             \vx^{k-1}_2-r\vA^\top(\vs_2^k-\vs_2^{k-1})\\
             \vx_2^k
      \end{bmatrix}.
  \end{equation}
  
  Combining~\eqref{pd3o:a} and~\eqref{pd3o:c}, we have
  $$\vs_2^{k+1}=(\vI+\frac{\lambda}{r}\partial h^*)^{-1}(\vs_2^k+\frac{\lambda}{r}\vA\vx_2^k).$$ The equation
  ~\eqref{pd3o:b} is reformulated by cancelling $\vz_2$ as
  \begin{equation*}
      \begin{aligned}
      \vx_2^{k+1}&=(\vI+r\partial g)^{-1}((\vI-\lambda\vA^\top\vA)\vx_2^k-r\nabla f(\vx_2^k)-r\vA^\top(2\vs_2^{k+1}-\vs_2^k-\frac{\lambda}{r}\vA\vx_2^k))\\
      &= (\vI+r\partial g)^{-1}(\vx_2^k-r\nabla f(\vx_2^k)-r\vA^\top(2\vs_2^{k+1}-\vs_2^k)).
      \end{aligned}
  \end{equation*}
  By defining $\vx_1^{k+1}=\vx_2^k-r\vA^\top(\vs^{k+1}_2-\vs_2^k)$, we observe that the sequence $\{(\vs_2^k,\vx_2^k)\}$ generated by PD3O coincides with the sequence $\{(\vs_1^k,\overbar{\vx}_1^k\}$ with the same initialization. The reverse direction also holds from~\eqref{afba_pd3o}.
  
  Due to the equivalence between AFBA and the base algorithm, PD3O is also equivalent to the base algorithm following the sequence relation
    \begin{equation}\label{base_pd3o}
      \begin{bmatrix}
             \vs_2^k\\
             \vx_2^k\\
             \vz_2^k
      \end{bmatrix}=\begin{bmatrix}
             \vs^k\\
             \zeta^k-r\vA^\top\vs^k\\
             (\vI-\lambda\vA\vA^\top)\vs^k+\frac{\lambda}{r}\vA\zeta^k
      \end{bmatrix},       \begin{bmatrix}
             \vs^k\\
             \vx^k\\
             \zeta^k
      \end{bmatrix}=\begin{bmatrix}
             \vs_2^k\\
             \vx^{k-1}_2-r\vA^\top(\vs_2^k-\vs_2^{k-1})\\
             \vx_2^k+r\vA^\top\vs_2^k
      \end{bmatrix}.
  \end{equation}

 \subsection{Connection with Chambolle-Pock}\label{connect:cp}
When the smooth function $f$ vanishes, the dual problem~\eqref{pb2: dual} boils down to
\begin{equation}\label{pb2: dual2}
    \Min_{
    \vs\in\RR^m}\ g^*(-\vA^\top\vs)+h^*(\vs).
\end{equation}
Applying Chambolle-Pock to it, we get
 \begin{subequations}\label{cp}
 \begin{empheq}[left=\empheqlfloor\ ]{align}
  \vs_3^{k+1}&=(\vI+\frac{\lambda}{r}\partial h^*)^{-1}(\vs_3^k+\frac{\lambda}{r}\vA\vx_3^k),\label{cp:a}\\
  \vx_3^{k+1}&=(\vI+r\partial g)^{-1}(\vx_3^{k}-r\vA^\top(2\vs_3^{k+1}-\vs_3^k)).\label{cp:b}
  \end{empheq}
  \end{subequations}

The relation between Chambolle-Pock and the base algorithm can be observed either  from the relation between PD3O and Chambolle-Pock or from that the  sequence $\{(\vs_3^k, \vx_3^k)\}$ generated by Chambolle-Pock coincides with the sequence $\{(\vs^k,\zeta^k-r\vA^\top\vs^k)\}$ generated by the base algorithm with appropriate initialization. 

The sequence relation is
    \begin{equation}\label{base_cp}
      \begin{bmatrix}
             \vs_3^k\\
             \vx_3^k\\
      \end{bmatrix}=\begin{bmatrix}
             \vs^k\\
             \zeta^k-r\vA^\top\vs^k\\
      \end{bmatrix},       \begin{bmatrix}
             \vs^k\\
             \vx^k\\
             \zeta^k
      \end{bmatrix}=\begin{bmatrix}
             \vs_3^k\\
             \vx^{k-1}_3-r\vA^\top(\vs_3^k-\vs_3^{k-1})\\
             \vx_3^k+r\vA^\top\vs_3^k
      \end{bmatrix}.
  \end{equation}

In this case, the relaxed condition on stepsizes of the base algorithm is $r>0$ and $\lambda<\frac{4}{3\sigma^{2}},$ which extends the choice of $\lambda$ to a larger range for Chambolle-Pock. This bound is shown to be tight in Section~\ref{tight}. 

\subsection{Connection with PAPC(PDFP$^2$O)}\label{connect:papc}
Although in Table~\ref{table1}, AFBA and PD3O are reduced to PAPC when $g=0$, and we have shown the equivalence between the base algorithm and them,  we now directly derive it for completeness.

With $g=0$, the iteration of the base algorithm reduces to
\begin{subequations}\label{algo_g=0}
 \begin{empheq}[left=\empheqlfloor\ ]{align}
  \vs^{k+1}&=(\vI+\frac{\lambda}{r}\partial h^*)^{-1}(\frac{\lambda}{r}\vA\zeta^k+(\vI-\lambda \vA\vA^\top)\vs^k),\label{iter:a2}\\
  \vx^{k+1}&=\zeta^k-r\vA^\top \vs^{k+1},\label{iter:b2}\\
  \zeta^{k+1}&=\vx^{k+1}-r\nabla f(\vx^{k+1}).\label{iter:c2}
  \end{empheq}
  \end{subequations}
Let $\vs_4^k = \vs^k$ and $\vx_4^k=\vx^k$. After canceling $\zeta$, we get
\begin{subequations}\label{papc}
 \begin{empheq}[left=\empheqlfloor\ ]{align}
  \vs_4^{k+1}&=(\vI+\frac{\lambda}{r}\partial h^*)^{-1}(\frac{\lambda}{r}\vA(\vx_4^k-r\nabla f(\vx_4^{k}))+(\vI-\lambda \vA\vA^\top)\vs_4^k),\label{papc:a}\\
  \vx_4^{k+1}&=\vx_4^k-r\nabla f(\vx_4^k)-r\vA^\top\vs_4^{k+1}.\label{papc:b}
  \end{empheq}
  \end{subequations}
  With appropriation initialization, i.e., $\zeta^0=\vx^0-r\nabla f(\vx^0),$ the above iteration is exactly PAPC applied to the problem~\eqref{pb1}. However, the relaxed condition, $r<\frac{4\theta-3}{2\theta-1}\frac{2}{L}$ and $\lambda<\frac{1}{\theta\sigma^2(\vA)}$ for $\theta\in(3/4,1]$ cannot achieve the tight bound shown in~\cite{li2021new}. One of the possible reason is that we cannot express $\vA^\top\vs$ as a combination of $\vx$ and $\nabla f(\vx)$ like~\eqref{papc:b} when nontrivial $g$ exists. This will be left for future research.

\subsection{Relation Diagram}
The relation between aforementioned algorithms is visualized via the following diagram where the algorithms in boxed are applied to the dual problem~\eqref{pb2: dual} with $\eta_{\text{p}}=\lambda/r$ and $\eta_{\text{d}}=r$.
\begin{equation*}
\begin{tikzcd}
\textbf{PAPC(PDFP$^2$O)\eqref{papc}}                                                                                                       &  &                                              &  &                       \\
\textbf{The Base Algorithm\eqref{algo}} \arrow[u, "g=0"] \arrow[d, "f=0"', "\eqref{base_cp}"] \arrow[rr, "\eqref{base_afba}"', Leftrightarrow] \arrow[rrrr, "\eqref{base_pd3o}", Leftrightarrow, bend left] &  & \textbf{AFBA\eqref{afba}} \arrow[rr, "\eqref{afba_pd3o}"', Leftrightarrow] &  & \boxed{\textbf{PD3O\eqref{pd3o}}} \\
\boxed{\textbf{Chambolle-Pock\eqref{cp}}}                                                                                                          &  &                                              &  &                      
\end{tikzcd}
\end{equation*}

The theoretical analysis of the base algorithm in the next section provides a unified proof of the convergence for the above algorithms under the relaxed conditions on $r$ and $\lambda.$ An example is given in Section~\ref{tight} to show the necessary condition of $r$ and $\lambda$.

\section{Convergence Analysis }\label{sec:conv}
\subsection{Convergence under a relaxed condition}
We first define the following auxiliary variables associated with the proximal mapping of $h^*$ and $g$
\begin{subequations}
    \begin{align}
    \vy^{k+1}&=(\vI+r\partial g)^{-1}(\vx^{k+1}-r\vA^\top \vs ^{k+1}-r\nabla f(\vx^{k+1})),\\
    \vq_g^{k+1}&=\frac{1}{r}\vx^{k+1}-\vA^\top \vs^{k+1}-\nabla f(\vx^{k+1})-\frac{1}{r} \vy^{k+1},\\
    \vq_h^{k+1}&=\vA\zeta^k+\frac{r}{\lambda}(\vI-\lambda \vA\vA^\top)\vs^k-\frac{r}{\lambda}\vs^{k+1}.
    \end{align}
\end{subequations}

With the existence of the saddle-point solution $(\vx^*, \vs^*),$ we will see the quadruple $(\zeta^{k+1}, \vy^{k+1}, \vq_g^{k+1}, \vq_h^{k+1})$ converges to
\begin{subequations}
    \begin{align}
    \zeta^*& =\vx^*+r\vA^\top \vs^*,\label{zeta_star}\\
    \vy^*&=\vx^*,\label{y_star}\\
    \vq_g^*&=-\vA^\top\vs^*-\nabla f(\vx^*),\\
    \vq_h^*&=\vA\zeta^*-r\vA\vA^\top \vs^*=\vA\vx^*.
    \end{align}
\end{subequations}

Though the relation between the base algorithm, AFBA and PD3O asserts the fixed point of iteration~\eqref{algo} solves the problem~\eqref{pb2}, we restate it in the following lemma for completeness.
\begin{lemma}[Optimality]\label{lem}
Let $(\vs^\star,\vx^\star,\zeta^\star)$ be the fixed point of the iteration~\eqref{algo}, then $(\vs^\star,\vx^\star)$ is the solution of the problem~\eqref{pb2} with $\zeta^\star=\vx^\star+r\vA^\top\vs^\star.$
\end{lemma}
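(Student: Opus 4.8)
The plan is to unwind the fixed-point equations~\eqref{iter:a}--\eqref{iter:c} and show that, with $\zeta^\star$ eliminated, they coincide exactly with the first-order optimality system for~\eqref{pb2}. First I would set $(\vs^k,\vx^k,\zeta^k)=(\vs^{k+1},\vx^{k+1},\zeta^{k+1})=(\vs^\star,\vx^\star,\zeta^\star)$ in~\eqref{algo}. Equation~\eqref{iter:b} then reads $\vx^\star=\zeta^\star-r\vA^\top\vs^\star$, which is precisely the asserted relation $\zeta^\star=\vx^\star+r\vA^\top\vs^\star$; this is the easy bookkeeping step and it lets me substitute $\zeta^\star$ out of the other two equations.

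Next I would feed $\zeta^\star=\vx^\star+r\vA^\top\vs^\star$ into the fixed-point version of~\eqref{iter:a}. The argument of the resolvent becomes $\tfrac{\lambda}{r}\vA(\vx^\star+r\vA^\top\vs^\star)+(\vI-\lambda\vA\vA^\top)\vs^\star=\vs^\star+\tfrac{\lambda}{r}\vA\vx^\star$, so $\vs^\star=(\vI+\tfrac{\lambda}{r}\partial h^*)^{-1}(\vs^\star+\tfrac{\lambda}{r}\vA\vx^\star)$. Rewriting the resolvent as an inclusion, $\vs^\star+\tfrac{\lambda}{r}\vA\vx^\star\in\vs^\star+\tfrac{\lambda}{r}\partial h^*(\vs^\star)$, i.e. $\vA\vx^\star\in\partial h^*(\vs^\star)$, which is exactly $0\in\partial h^*(\vs^\star)-\vA\vx^\star$, the second line of the optimality system. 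Then I would treat the fixed-point version of~\eqref{iter:c}: it reads $\zeta^\star=(\vI+r\partial g)^{-1}(\vx^\star-r\vA^\top\vs^\star-r\nabla f(\vx^\star))-\vx^\star+\zeta^\star$, so the two $\zeta^\star$ terms cancel and we get $\vx^\star=(\vI+r\partial g)^{-1}(\vx^\star-r\vA^\top\vs^\star-r\nabla f(\vx^\star))$. Converting to an inclusion gives $\vx^\star-r\vA^\top\vs^\star-r\nabla f(\vx^\star)\in\vx^\star+r\partial g(\vx^\star)$, i.e. $0\in\nabla f(\vx^\star)+\partial g(\vx^\star)+\vA^\top\vs^\star$, the first line of the optimality system.

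Having recovered both inclusions of~\eqref{pb3}, I would invoke the stated assumption that a saddle point of~\eqref{pb2} exists together with the standard fact that the first-order system is equivalent to $(\vs^\star,\vx^\star)$ solving~\eqref{pb2} (this is just convexity of $g,h^*$ and the saddle-point characterization), to conclude the proof. There is no real obstacle here — the only thing to be slightly careful about is the direction of the resolvent-to-inclusion rewriting (that $z=(\vI+M)^{-1}w$ is equivalent to $w-z\in Mz$ when $M$ is maximally monotone, which applies to $r\partial g$ and $\tfrac{\lambda}{r}\partial h^*$), and making sure the cancellation of $\zeta^\star$ in~\eqref{iter:c} is handled before rewriting. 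If one wanted to be even more economical, one could simply cite the already-established equivalence with AFBA~\eqref{base_afba} and the optimality of AFBA fixed points, but giving the two-line direct argument is cleaner and self-contained.
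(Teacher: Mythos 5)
Your proposal is correct and follows essentially the same route as the paper's proof: use the fixed-point version of~\eqref{iter:b} to get $\zeta^\star=\vx^\star+r\vA^\top\vs^\star$, eliminate $\zeta^\star$ from~\eqref{iter:a} and~\eqref{iter:c}, and rewrite the resolvents as the two inclusions of the optimality system for~\eqref{pb2}. The only cosmetic difference is that you carry out the resolvent-to-inclusion rewriting explicitly, while the paper records the same relations in a slightly more compact (set-valued equation) form.
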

\begin{proof}
Iteration~\eqref{iter:b} gives the relation $\zeta^\star =\vx^\star+r\vA^\top\vs^\star.$
From~\eqref{iter:a} and~\eqref{iter:c}, we have
\begin{equation*}
    \begin{aligned}
    \vs^\star+\frac{\lambda}{r}\partial h^*(\vs^\star)&=\frac{\lambda}{r}\vA\zeta^\star+(\vI-\lambda\vA\vA^\top)\vs^\star,\\
    \vx^\star+r\partial g(\vx^\star)&=\vx^\star-r\vA^\top\vs^\star-r\nabla f(\vx^\star).
    \end{aligned}
\end{equation*}

Canceling $\zeta^*$, we have
\begin{equation*}
    \begin{bmatrix}
           \partial h^*& -\vA\\
           \vA^\top&\partial g+\nabla f
    \end{bmatrix}\begin{bmatrix}
           \vs^\star\\
           \vx^\star
    \end{bmatrix}=\begin{bmatrix}
           0\\
           0
    \end{bmatrix},
\end{equation*}
which is the optimal condition of the problem~\eqref{pb2}.\qed
\end{proof}

 We now use $(\vs^*,\vx^*,\zeta^*)$ to denote an arbitrary fixed point of the base algorithm.

\begin{lemma}[Fundamental equality]\label{lem1}
Let the sequence $\{(\vs^k, \vx^k,\zeta^k)\}$ be generated by the iteration~\eqref{algo}, we have the following two equalities hold:
    \begin{align}
    &\ \dotp{\vy^k-\vy^*,\vq_g^k-\vq_g^*}+\dotp{\vs^{k+1}-\vs^k,\vq_h^{k+1}-\vq_h^*}\nonumber\\
    =&\ \frac{1}{r}\dotp{\vx^{k+1}-\vx^*,\vx^k-\vx^{k+1}}+T_1+\frac{r}{\lambda}\dotp{\vs^{k+1}-\vs^*,\vs^k-\vs^{k+1}}+T_2\label{lem:eq1}
    \end{align}
    and
    \begin{align}
    &\ \dotp{\vy^{k+1}-\vy^*,\vq_g^{k+1}-\vq_g^*}+\dotp{\vs^{k+1}-\vs^k,\vq_h^{k+1}-\vq_h^*}\nonumber \\
    =&\ \frac{1}{r}\dotp{\zeta^{k+1}-\zeta^*,\zeta^k-\zeta^{k+1}}+\frac{r}{\lambda}\dotp{\vs^{k+1}-\vs^*,\vs^k-\vs^{k+1}}\nonumber\\
    &\ +r\dotp{\vA^\top(\vs^{k+1}-\vs^{*}),\vA^\top(\vs^{k+1}-\vs^k)}+T_3,\label{lem:eq2}
    \end{align}
where
\begin{equation*}
    \begin{aligned}
    &T_1 \coloneqq \dotp{\vA^\top(\vs^{k+1}-\vs^k),\vx^k-\vx^{k+1}},\\
    &T_2 \coloneqq \dotp{\vy^*-\vy^k,\nabla f(\vx^k)-\nabla f(\vx^*)},\\
    &T_3 \coloneqq \dotp{\vy^*-\vy^{k+1},\nabla f(\vx^{k+1})-\nabla f(\vx^*)}.
    \end{aligned}
\end{equation*}
\end{lemma}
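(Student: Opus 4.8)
The plan is to derive both equalities directly from the definitions of the auxiliary variables $\vy^k, \vq_g^k, \vq_h^k$ and the iteration~\eqref{algo}, using the fixed-point relations for $(\vy^*, \vq_g^*, \vq_h^*)$ as the reference point. For the first equality~\eqref{lem:eq1}, I would start from the definition $\vq_g^k = \frac{1}{r}\vx^k - \vA^\top\vs^k - \nabla f(\vx^k) - \frac{1}{r}\vy^k$ and the analogous identity $\vq_g^* = -\vA^\top\vs^* - \nabla f(\vx^*) = \frac{1}{r}\vx^* - \vA^\top\vs^* - \nabla f(\vx^*) - \frac{1}{r}\vy^*$ (using $\vy^* = \vx^*$). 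Subtracting gives $\vq_g^k - \vq_g^* = \frac{1}{r}(\vx^k - \vx^*) - \vA^\top(\vs^k - \vs^*) - (\nabla f(\vx^k) - \nabla f(\vx^*)) - \frac{1}{r}(\vy^k - \vy^*)$. Pairing with $\vy^k - \vy^*$ and expanding produces a $\frac1r\dotp{\vy^k-\vy^*, \vx^k-\vx^*}$ term, a $-\dotp{\vy^k-\vy^*,\vA^\top(\vs^k-\vs^*)}$ term, the $T_2$ term, and a $-\frac1r\|\vy^k-\vy^*\|^2$ term. Then I would rewrite $\vy^k - \vy^*$ using $\vy^k$'s relation to $\zeta^k$ and $\vx^k$: from~\eqref{iter:c}, $\zeta^{k} = \vy^{k} - \vx^{k} + \zeta^{k-1}$ combined with $\vx^k = \zeta^{k-1} - r\vA^\top\vs^k$ (from~\eqref{iter:b}) gives $\vy^k = \zeta^k + \vx^k - \zeta^{k-1} = \zeta^k - \zeta^{k-1} + \zeta^{k-1} - r\vA^\top\vs^k$; more usefully I would track $\vx^{k+1} = \zeta^k - r\vA^\top\vs^{k+1}$, so that $\vy^k - \vx^{k+1}$ and $\vx^k - \vx^{k+1}$ become expressible through $\zeta$-differences and $\vA^\top\vs$-differences. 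The dual side is handled symmetrically: from the definition of $\vq_h^{k+1}$ and $\vq_h^* = \vA\vx^*$, one gets $\vq_h^{k+1} - \vq_h^* = \vA(\zeta^k - \zeta^*) + \frac{r}{\lambda}(\vI - \lambda\vA\vA^\top)(\vs^k - \vs^*) - \frac{r}{\lambda}(\vs^{k+1} - \vs^*)$, and using $\zeta^* = \vx^* + r\vA^\top\vs^*$ plus~\eqref{iter:b} to write $\vA\zeta^k = \vA\vx^{k+1} + r\vA\vA^\top\vs^{k+1}$ collapses this to $\vq_h^{k+1} - \vq_h^* = \frac{r}{\lambda}(\vs^k - \vs^{k+1}) + (\text{terms in }\vA(\vx^{k+1}-\vx^*)\text{ and }\vA\vA^\top(\vs^{k+1}-\vs^*))$; pairing with $\vs^{k+1}-\vs^k$ yields the $\frac{r}{\lambda}\dotp{\vs^{k+1}-\vs^*,\vs^k-\vs^{k+1}}$ term, the cross term $\dotp{\vA^\top(\vs^{k+1}-\vs^k), \vx^{k+1}-\vx^*}$, and an $\vA\vA^\top$ term. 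Collecting the cross terms from the primal and dual computations and recognizing that their sum reorganizes into $\frac1r\dotp{\vx^{k+1}-\vx^*,\vx^k-\vx^{k+1}} + T_1$ is the bookkeeping step.

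For the second equality~\eqref{lem:eq2}, the computation is parallel but anchored at step $k+1$ rather than $k$: I would use $\vq_g^{k+1} - \vq_g^* = \frac1r(\vx^{k+1}-\vx^*) - \vA^\top(\vs^{k+1}-\vs^*) - (\nabla f(\vx^{k+1})-\nabla f(\vx^*)) - \frac1r(\vy^{k+1}-\vy^*)$ and the key observation that $\vy^{k+1} = \zeta^{k+1} - \zeta^k + \vx^{k+1}$ from~\eqref{iter:c}, so $\vx^{k+1} - \vy^{k+1} = \zeta^k - \zeta^{k+1}$. Substituting, the $\frac1r$-terms coming from $\vy^{k+1}$ and $\vx^{k+1}$ combine into $\frac1r\dotp{\zeta^{k+1}-\zeta^*, \zeta^k - \zeta^{k+1}}$ after using $\zeta^{k+1}-\zeta^* = (\vy^{k+1}-\vy^*) - (\vx^{k+1}-\vx^*) + (\vx^{k+1} - \vx^*)\cdots$ — more precisely, $\vy^{k+1} - \vy^* = (\vy^{k+1}-\vx^{k+1}) + (\vx^{k+1}-\vx^*) = (\zeta^k - \zeta^{k+1}) + (\vx^{k+1}-\vx^*)$ and $\vx^{k+1}-\vx^* = (\zeta^k - r\vA^\top\vs^{k+1}) - (\zeta^* - r\vA^\top\vs^*) = (\zeta^k - \zeta^*) - r\vA^\top(\vs^{k+1}-\vs^*)$, which after pairing $\vy^{k+1}-\vy^*$ with $\vq_g^{k+1}-\vq_g^*$ and with the dual contribution (unchanged from the first part since the $\vq_h^{k+1}$ pairing with $\vs^{k+1}-\vs^k$ is identical) produces the $\frac1r\zeta$-inner product, the $\frac{r}{\lambda}\vs$-inner product, the $r\dotp{\vA^\top(\vs^{k+1}-\vs^*),\vA^\top(\vs^{k+1}-\vs^k)}$ term — which appears because the $\vA\vA^\top$ terms from both the primal substitution of $\vx^{k+1}-\vx^*$ and the dual $\vq_h$ expansion combine differently than in~\eqref{lem:eq1} — and $T_3$.

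The main obstacle I anticipate is the algebraic reorganization of the cross terms involving $\vA^\top(\vs^{k+1}-\vs^k)$, $\vA^\top(\vs^{k+1}-\vs^*)$, $\vx^k - \vx^{k+1}$, and $\zeta^k - \zeta^{k+1}$: there are several ways these group, and getting exactly $T_1$ in~\eqref{lem:eq1} versus the $r\dotp{\vA^\top(\vs^{k+1}-\vs^*),\vA^\top(\vs^{k+1}-\vs^k)}$ term in~\eqref{lem:eq2} requires carefully using the identity $\vx^{k+1} = \zeta^k - r\vA^\top\vs^{k+1}$ (equivalently $\zeta^k - \zeta^{k+1} = (\vx^{k+1} - \vx^{k+2}) + r\vA^\top(\vs^{k+2}-\vs^{k+1})$, shifted appropriately) to convert between $\vx$-differences and $\zeta$-differences without introducing spurious terms. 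I would double-check consistency by verifying that both right-hand sides equal the common left-hand side, so that subtracting~\eqref{lem:eq1} from~\eqref{lem:eq2} gives a pure identity among $\zeta$, $\vx$, $\vs$ differences that must hold by~\eqref{iter:b}–\eqref{iter:c}; this provides a self-check on the bookkeeping. No deep idea is needed — it is a direct expansion — but it is the kind of computation where sign errors propagate, so I would organize it by first writing $\vq_g^k - \vq_g^*$, $\vq_g^{k+1}-\vq_g^*$, and $\vq_h^{k+1}-\vq_h^*$ in their reduced forms as standalone displayed identities before taking any inner products.
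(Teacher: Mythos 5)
Your overall route is the same as the paper's: reduce $\vq_g^k-\vq_g^*$, $\vq_g^{k+1}-\vq_g^*$ and $\vq_h^{k+1}-\vq_h^*$ to explicit differences using $\vy^*=\vx^*$, $\vq_h^*=\vA\vx^*$ and $\vA\zeta^k=\vA\vx^{k+1}+r\vA\vA^\top\vs^{k+1}$, then expand the inner products and regroup with the identities $\vx^{k+1}-\vy^k=-r\vA^\top(\vs^{k+1}-\vs^k)$, $\vx^{k+1}-\vy^{k+1}=\zeta^k-\zeta^{k+1}$ and $\vy^{k+1}=\zeta^{k+1}-r\vA^\top\vs^{k+1}$. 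These are exactly the identities \eqref{lem:eq3}--\eqref{lem:eq5} that the paper's proof pivots on, so the skeleton is sound.

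There is, however, one concrete step in your description that cannot work as written. You claim that pairing $\vq_h^{k+1}-\vq_h^*$ with $\vs^{k+1}-\vs^k$ ``yields the $\frac{r}{\lambda}\dotp{\vs^{k+1}-\vs^*,\vs^k-\vs^{k+1}}$ term.'' It does not: with $\vq_h^{k+1}-\vq_h^*=\vA(\vx^{k+1}-\vx^*)+\frac{r}{\lambda}(\vI-\lambda\vA\vA^\top)(\vs^k-\vs^{k+1})$, pairing against $\vs^{k+1}-\vs^k$ produces $-\frac{r}{\lambda}\|\vs^{k+1}-\vs^k\|^2_{\vI-\lambda\vA\vA^\top}+\dotp{\vs^{k+1}-\vs^k,\vA(\vx^{k+1}-\vx^*)}$, and no regrouping turns that into the stated right-hand side; a term of the form $\dotp{\vs^{k+1}-\vs^*,\cdot}$ can only arise from pairing against $\vs^{k+1}-\vs^*$. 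The resolution is that the dual inner product on the left of both \eqref{lem:eq1} and \eqref{lem:eq2} should read $\dotp{\vs^{k+1}-\vs^*,\vq_h^{k+1}-\vq_h^*}$: this is what the paper's own proof actually computes (the $\vs^{k+1}-\vs^k$ in the displayed statement is a typo), it is what the right-hand sides force, and it is what is needed downstream, since the descent argument relies on $\dotp{\vs^{k+1}-\vs^*,\vq_h^{k+1}-\vq_h^*}\ge 0$ from monotonicity of $\partial h^*$. With the corrected pairing, the cross term $\dotp{\vs^{k+1}-\vs^*,\vA(\vx^{k+1}-\vx^*)}$ combines with the primal term $-\dotp{\vy^k-\vy^*,\vA^\top(\vs^{k+1}-\vs^*)}$ into $\dotp{\vx^{k+1}-\vy^k,\vA^\top(\vs^{k+1}-\vs^*)}=-r\dotp{\vA^\top(\vs^{k+1}-\vs^k),\vA^\top(\vs^{k+1}-\vs^*)}$, which exactly cancels the $\vA\vA^\top$ part of $\frac{r}{\lambda}(\vI-\lambda\vA\vA^\top)$ in \eqref{lem:eq1} and survives as the extra $r\dotp{\vA^\top(\vs^{k+1}-\vs^{*}),\vA^\top(\vs^{k+1}-\vs^k)}$ term in \eqref{lem:eq2}; that is the cancellation your sketch glosses over. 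Carried out with this correction, your plan reproduces the paper's computation.
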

\begin{proof}
Plugging the iteration~\eqref{iter:b} into $\vq_h^{k+1}$ to cancel $\zeta^k,$ we have
\begin{align}
 \vq_h^{k+1}=&\ \vA\vx^{k+1}+r\vA\vA^\top\vs^{k+1}+\frac{r}{\lambda}(\vI-\lambda\vA\vA^\top)\vs^k-\frac{r}{\lambda}\vs^{k+1}\nonumber\\
  =&\ \vA\vx^{k+1}+\frac{r}{\lambda}(\vI-\lambda\vA\vA^\top)(\vs^k-\vs^{k+1}).
\end{align}

For the equality~\eqref{lem:eq1}, we have
    \begin{align}
    &\ \dotp{\vy^k-\vy^{*}, \vq_g^{k}-\vq_g^{*}}+\dotp{\vs^{k+1}-\vs^{*}, \vq_h^{k+1}-\vq_h^{*}}\nonumber\\
  =&\ \underbrace{\frac{1}{r}\dotp{\vy^k-\vy^*,\vx^k-\vx^*}-\frac{1}{r}\|\vy^k-\vy^*\|^2}_{=\frac{1}{r}\dotp{\vy^k-\vy^*, \vx^k-\vy^k}}-\dotp{\vy^k-\vy^*,\vA^\top(\vs^k-\vs^*)}+T_2\nonumber\\
  &\ +\dotp{\vs^{k+1}-\vs^*, \vA(\vx^{k+1}-\vx^*)}+\frac{r}{\lambda}\dotp{\vs^{k+1}-\vs^*,(\vI-\lambda \vA\vA^\top)(\vs^k-\vs^{k+1})}\nonumber\\
  =&\ \frac{1}{r}\dotp{\vy^k-\vy^*,\vx^k-\vx^{k+1}}\underbrace{-\dotp{\vy^k-\vy^*, \vA^\top(\vs^{k+1}-\vs^k)}-\dotp{\vy^k-\vy^*,\vA^\top(\vs^k-\vs^*)}}_{=-\dotp{\vy^k-\vy^*,\vA^\top(\vs^{k+1}-\vs^*)}}\nonumber\\
  &\ +T_2+\dotp{\vs^{k+1}-\vs^*, \vA(\vx^{k+1}-\vx^*)}+\frac{r}{\lambda}\dotp{\vs^{k+1}-\vs^*,(\vI-\lambda \vA\vA^\top)(\vs^k-\vs^{k+1})}\nonumber\\
    =&\ \frac{1}{r}\dotp{\vy^k-\vy^*,\vx^k-\vx^{k+1}}+\dotp{\vx^{k+1}-\vy^k, \vA^\top(\vs^{k+1}-\vs^*)}+T_2\nonumber\\
  &\ +\frac{r}{\lambda}\dotp{\vs^{k+1}-\vs^*,(\vI-\lambda \vA\vA^\top)(\vs^k-\vs^{k+1})},
    \end{align}
where the second equality uses
\begin{align}
    \vx^k-\vx^{k+1}=&\ \vx^k-\zeta^k+r\vA^\top\vs^{k+1}\nonumber\\ 
    =&\ \vx^{k}-\vy^{k}+r\vA^\top(\vs^{k+1}-\vs^k),\label{lem:eq3}
\end{align}
which is derived from the iteration~\eqref{iter:b} and~\eqref{iter:c}.

Note that from the equality~\eqref{lem:eq3}, we also have
\begin{align}\label{lem:eq4}
    \vx^{k+1}-\vy^k=-r\vA^\top(\vs^{k+1}-\vs^k).
\end{align}
Hence
\begin{align}
    &\ \dotp{\vy^k-\vy^{*}, \vq_g^{k}-\vq_g^{*}}+\dotp{\vs^{k+1}-\vs^{*}, \vq_h^{k+1}-\vq_h^{*}}\nonumber\\
    =&\ \frac{1}{r}\dotp{\vy^k-\vy^*,\vx^k-\vx^{k+1}}-r\dotp{\vs^{k+1}-\vs^k, \vA\vA^\top(\vs^{k+1}-\vs^*)}+T_2\nonumber\\
  &\ +\frac{r}{\lambda}\dotp{\vs^{k+1}-\vs^*,(\vI-\lambda \vA\vA^\top)(\vs^k-\vs^{k+1})}\nonumber \\
  =&\ \frac{1}{r}\dotp{\vy^k-\vy^*,\vx^k-\vx^{k+1}}+\frac{r}{\lambda}\dotp{\vs^{k+1}-\vs^*,\vs^k-\vs^{k+1}}+T_2\nonumber\\
  =&\frac{1}{r}\dotp{\vx^{k+1}-\vx^*,\vx^k-\vx^{k+1}}+T_1+\frac{r}{\lambda}\dotp{\vs^{k+1}-\vs^*,\vs^k-\vs^{k+1}}+T_2
\end{align}
and the equality~\eqref{lem:eq1} is derived.

For the equality~\eqref{lem:eq2}, we have
\begin{align}
     &\ \dotp{\vy^{k+1}-\vy^{*}, \vq_g^{k+1}-\vq_g^{*}}+\dotp{\vs^{k+1}-\vs^{*}, \vq_h^{k+1}-\vq_h^{*}}\nonumber\\
     =&\ \frac{1}{r}\dotp{\vy^{k+1}-\vy^*,\vx^{k+1}-\vy^{k+1}}-\dotp{\vy^{k+1}-\vy^*,\vA^\top(\vs^{k+1}-\vs^*)}+T_3\nonumber\\
     &\ +\dotp{\vs^{k+1}-\vs^*,\vA(\zeta^k-\zeta^*)}+\frac{r}{\lambda}\dotp{\vs^{k+1}-\vs^*,(\vI-\lambda\vA\vA^\top)\vs^k-\vs^{k+1}+\lambda\vA\vA^\top\vs^*}\nonumber\\
    =&\ \frac{1}{r}\dotp{\vy^{k+1}-\vy^*,\zeta^{k}-\zeta^{k+1}}-\dotp{\vy^{k+1}-\vy^*,\vA^\top(\vs^{k+1}-\vs^*)}+T_3\nonumber\\
     &\ +\underbrace{\dotp{\vs^{k+1}-\vs^*,\vA(\zeta^k-\zeta^*)}+r\dotp{\vs^{k+1}-\vs^*,\vA\vA^\top
     (\vs^*-\vs^k)}}_{=\dotp{\vA^\top(\vs^{k+1}-\vs^*), \vy^k-\vy^*}}\nonumber\\
     &\ +\frac{r}{\lambda}\dotp{\vs^{k+1}-\vs^*,\vs^k-\vs^{k+1}}\nonumber \\
     =&\ \frac{1}{r}\dotp{\vy^{k+1}-\vy^*,\zeta^{k}-\zeta^{k+1}}+\dotp{\vy^{k}-\vy^{k+1},\vA^\top(\vs^{k+1}-\vs^*)}+T_3\nonumber\\
    &\ +\frac{r}{\lambda}\dotp{\vs^{k+1}-\vs^*,\vs^k-\vs^{k+1}}.
\end{align}
Note that from the equality~\eqref{lem:eq3}, we also have
\begin{align}\vy^{k+1}=\zeta^{k+1}-r\vA^\top\vs^{k+1}\label{lem:eq5}.\end{align}
Combining it with~\eqref{zeta_star} and~\eqref{y_star}, we have
\begin{align}
    \vy^{k+1}-\vy^*=\zeta^{k+1}-\zeta^*-r\vA^\top(\vs^{k+1}-\vs^*),
\end{align}
then
\begin{align}
    &\ \dotp{\vy^{k+1}-\vy^{*}, \vq_g^{k+1}-\vq_g^{*}}+\dotp{\vs^{k+1}-\vs^{*}, \vq_h^{k+1}-\vq_h^{*}}\nonumber\\
    =&\ \frac{1}{r}\dotp{\zeta^{k+1}-\zeta^*,\zeta^k-\zeta^{k+1}}+\frac{r}{\lambda}\dotp{\vs^{k+1}-\vs^*,\vs^k-\vs^{k+1}}+T_3\nonumber\\
    &\ +\dotp{\vy^{k}-\vy^{k+1},\vA^\top(\vs^{k+1}-\vs^*)}-\dotp{\zeta^k-\zeta^{k+1},\vA^\top(\vs^{k+1}-\vs^*)}\nonumber\\
    =&\ \frac{1}{r}\dotp{\zeta^{k+1}-\zeta^*,\zeta^k-\zeta^{k+1}}+\frac{r}{\lambda}\dotp{\vs^{k+1}-\vs^*,\vs^k-\vs^{k+1}}+T_3\nonumber\\
    &\ +r\dotp{\vA^\top(\vs^{k+1}-\vs^k),\vA^\top(\vs^{k+1}-\vs^*)},
\end{align}
where the last equality uses~\eqref{lem:eq5} and the equality~\eqref{lem:eq2} is derived.  \qed
\end{proof}

The next lemma characterizes the upper bound for $T_1, T_2$ and $T_3$.  

\begin{lemma}\label{lem2}
For any $\theta\in(3/4, 1]$ and $\theta\lambda\sigma^2\leq 1$, we have the following inequalities hold.
\begin{align}\label{lem2:ineq1}
    T_1\leq&\ \frac{r}{4\lambda}\|\vs^k-\vs^{k-1}\|^2_{\vM}-\frac{r}{4\lambda}\|\vs^{k+1}-\vs^k\|^2_{\vM}+\frac{1}{4}(1-\theta)r\|\vA^\top(\vs^k-\vs^{k-1})\|^2\nonumber\\
    &\ +(\frac{1}{2}-\frac{1}{4}\theta)r\|\vA^\top(\vs^{k+1}-\vs^k)\|^2+(\frac{5}{4}-\theta)\frac{1}{r}\|\vx^{k}-\vx^{k+1}\|^2,
\end{align}
\begin{align}\label{lem2:ineq2}
    T_2\leq&\ \frac{L}{4}\|\zeta^{k-1}-\zeta^k\|^2\nonumber\\
    =&\ \frac{L}{4}\|\vx^k-\vx^{k+1}\|^2+\frac{r^2L}{4}\|\vA^\top(\vs^k-\vs^{k+1})\|^2-\frac{rL}{2}T_1,
\end{align}
\begin{align}\label{lem2:ineq3}
    T_3\leq&\ \frac{L}{4}\|\zeta^k-\zeta^{k+1}\|^2,
\end{align}
where  $T_1, T_2, T_3$ are defined in Lemma~\ref{lem1} and $\vM\coloneqq \vI-\theta\lambda\vA\vA^\top\succcurlyeq\vzero.$
\end{lemma}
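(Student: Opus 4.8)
The plan is to establish the three bounds essentially independently, and the core of the work is inequality~\eqref{lem2:ineq1} for $T_1$; the bounds on $T_2$ and $T_3$ will then follow relatively quickly from $L$-Lipschitz continuity of $\nabla f$ together with convexity. Let me address them in the order $T_3$, $T_2$, $T_1$, since that is roughly the order of increasing difficulty.

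For $T_3 = \dotp{\vy^*-\vy^{k+1},\nabla f(\vx^{k+1})-\nabla f(\vx^*)}$, recall $\vy^* = \vx^*$ and note that $\vy^{k+1} = \prox_{rg}(\vx^{k+1}-r\vA^\top\vs^{k+1}-r\nabla f(\vx^{k+1}))$ while $\zeta^{k+1} = \vy^{k+1} - \vx^{k+1} + \zeta^k$; combining with $\vy^{k+1} = \zeta^{k+1}-r\vA^\top\vs^{k+1}$ from~\eqref{lem:eq5} I would rewrite $\vy^* - \vy^{k+1}$ in a way that exposes a term $\nabla f(\vx^{k+1})-\nabla f(\vx^*)$ with the right sign. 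The standard trick is the cocoercivity / descent-lemma inequality: for convex $f$ with $L$-Lipschitz gradient, $\dotp{\vu-\vv, \nabla f(\vu)-\nabla f(\vv)} \ge \frac{1}{L}\|\nabla f(\vu)-\nabla f(\vv)\|^2$, combined with a Young/Cauchy--Schwarz step absorbing the cross term into $\tfrac{L}{4}\|\cdot\|^2$. Getting $\|\zeta^k-\zeta^{k+1}\|^2$ to appear on the right will rely on the identity $\zeta^k-\zeta^{k+1} = \vx^{k+1}-\vy^{k+1}$ (from~\eqref{iter:c}) so that the quantity controlling the inner product is exactly a $\zeta$-difference. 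For $T_2$, the same cocoercivity argument applied at index $k$ gives the bound $\tfrac{L}{4}\|\zeta^{k-1}-\zeta^k\|^2$; then the stated equality is pure bookkeeping — expand $\zeta^{k-1}-\zeta^k = \vx^k - \vy^k$, use~\eqref{lem:eq3}/\eqref{lem:eq4} to write $\vx^k-\vy^k = (\vx^k-\vx^{k+1}) - r\vA^\top(\vs^{k+1}-\vs^k)$, square, and recognize the cross term as $-2r T_1 / \|\cdot\|$-normalized, i.e.\ $-\tfrac{rL}{2}T_1$ after multiplying by $\tfrac{L}{4}$; no inequality is lost here, hence the ``$=$''.

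The substantive step is $T_1 = \dotp{\vA^\top(\vs^{k+1}-\vs^k),\vx^k-\vx^{k+1}}$. Here I would first replace $\vx^k-\vx^{k+1}$ using~\eqref{lem:eq3}, $\vx^k-\vx^{k+1} = (\vx^k-\vy^k) + r\vA^\top(\vs^{k+1}-\vs^k)$, which splits $T_1$ into a term involving the $g$-proximal residual at step $k$ and a clean quadratic $r\|\vA^\top(\vs^{k+1}-\vs^k)\|^2$. For the residual term I would use the firm nonexpansiveness / monotonicity of $\prox_{rg}$ applied at consecutive iterates $k-1$ and $k$ (their prox-arguments differ by a combination of $\vx$-, $\vA^\top\vs$- and $\nabla f$-differences), turning it into negative squared differences of $\vy$ plus cross terms. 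The mechanism that produces the telescoping $\vM$-weighted terms $\tfrac{r}{4\lambda}(\|\vs^k-\vs^{k-1}\|_\vM^2 - \|\vs^{k+1}-\vs^k\|_\vM^2)$ is: apply Young's inequality to each cross term with a carefully chosen weight, collect the $\|\vs^{k+1}-\vs^k\|^2$ and $\|\vs^k-\vs^{k-1}\|^2$ contributions, and use $\vM = \vI - \theta\lambda\vA\vA^\top \succcurlyeq \vzero$ (which holds precisely because $\theta\lambda\sigma^2 \le 1$) to combine an $\|\cdot\|^2$ piece and an $\|\vA^\top\cdot\|^2$ piece into a single $\|\cdot\|_\vM^2$ piece. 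The free parameter $\theta\in(3/4,1]$ is exactly the knob that balances how much of $\|\vA^\top(\vs^{k+1}-\vs^k)\|^2$ versus $\tfrac1r\|\vx^k-\vx^{k+1}\|^2$ is left over, giving the coefficients $(\tfrac12-\tfrac14\theta)$, $\tfrac14(1-\theta)$, and $(\tfrac54-\theta)$ on the right-hand side.

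The main obstacle I anticipate is the $T_1$ estimate: specifically, choosing the Young's-inequality weights so that (i) the non-telescoping leftover coefficients land exactly on the values $\tfrac14(1-\theta)$, $\tfrac12-\tfrac14\theta$, $\tfrac54-\theta$, and (ii) the $\vM$-weighted terms telescope cleanly — this requires the residual cross terms from the $\prox_{rg}$ monotonicity step to organize themselves into differences at levels $k$ and $k-1$ rather than producing stray level-$k+1$ or level-$(k-1)$ junk. One has to be careful that $\nabla f$ appears inside the prox argument, so the monotonicity inequality for $\prox_{rg}$ will also spit out an $\nabla f$-cross term; I expect that term to be either absorbed into $T_1$'s own reformulation or cancelled against a contribution that is bookkept separately (and this is likely why $T_1$ reappears on the right of~\eqref{lem2:ineq2} with coefficient $-\tfrac{rL}{2}$ — the $f$-dependence of $T_1$ and $T_2$ are coupled). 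Everything else is Cauchy--Schwarz, Young, and expansion of squares.
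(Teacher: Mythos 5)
Your handling of $T_2$ and $T_3$ is correct and is the paper's argument: cocoercivity of $\nabla f$ plus Young gives $T_3\le\frac{L}{4}\|\vx^{k+1}-\vy^{k+1}\|^2=\frac{L}{4}\|\zeta^k-\zeta^{k+1}\|^2$, and expanding $\|\vx^k-\vy^k\|^2$ via~\eqref{lem:eq3} produces the exact identity in~\eqref{lem2:ineq2}, the cross term being $-\frac{rL}{2}T_1$.

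The gap is in $T_1$, which is the heart of the lemma, and your proposed mechanism aims at the wrong proximal operator. The telescoping terms $\frac{r}{4\lambda}\|\vs^k-\vs^{k-1}\|_{\vM}^2-\frac{r}{4\lambda}\|\vs^{k+1}-\vs^k\|_{\vM}^2$ involve consecutive \emph{dual} differences, and they come from the $\vs$-update, i.e.\ from $\prox_{\frac{\lambda}{r}h^*}$, not from $\prox_{rg}$: since $\vq_h^{k+1}\in\partial h^*(\vs^{k+1})$, monotonicity gives $\dotp{\vs^{k+1}-\vs^k,\vq_h^{k+1}-\vq_h^k}\ge0$, and the identity $\vq_h^{k+1}-\vq_h^k=\vA(\vx^{k+1}-\vx^k)-\frac{r}{\lambda}(\vI-\lambda\vA\vA^\top)\big[(\vs^{k+1}-\vs^k)-(\vs^k-\vs^{k-1})\big]$ turns this into $T_1\le\frac{r}{\lambda}\dotp{\vs^{k+1}-\vs^k,(\vI-\lambda\vA\vA^\top)\big[(\vs^k-\vs^{k-1})-(\vs^{k+1}-\vs^k)\big]}$; splitting $\vI-\lambda\vA\vA^\top=\vM-(1-\theta)\lambda\vA\vA^\top$, polarizing the $\vM$-part and applying Cauchy to the rest yields a first bound with coefficients $\frac{r}{2\lambda}$, $\frac12(1-\theta)r$, $\frac32(1-\theta)r$. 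Your route through $\prox_{rg}$ cannot generate these $\vs$-telescopes and, worse, necessarily injects $\nabla f(\vx^{k+1})-\nabla f(\vx^k)$ cross-terms, whereas~\eqref{lem2:ineq1} contains no $L$ at all; there is no separately bookkept contribution for them to cancel against. Your opening decomposition is also circular: by~\eqref{lem:eq3}, $\dotp{\vA^\top(\vs^{k+1}-\vs^k),\vx^k-\vy^k}=T_1-r\|\vA^\top(\vs^{k+1}-\vs^k)\|^2$, so splitting off the ``clean quadratic'' gains nothing. Finally, you miss the second ingredient: the stated coefficients arise by averaging the bound above with the direct Young estimate $T_1\le(\theta-\frac12)r\|\vA^\top(\vs^{k+1}-\vs^k)\|^2+(\frac52-2\theta)\frac1r\|\vx^k-\vx^{k+1}\|^2$, valid because $(\theta-\frac12)(\frac52-2\theta)-\frac14=2(\theta-\frac34)(1-\theta)\ge0$ for $\theta\in(3/4,1]$ — this averaging is where the restriction $\theta>3/4$ actually enters, not in a choice of Young weights inside a prox-monotonicity step.
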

\begin{proof}
We start from the upper bound of $T_3.$ Since $\vy^*=\vx^*$,
\begin{align}
    T_3=&\ \dotp{\vx^{k+1}-\vy^{k+1},\nabla f(\vx^{k+1})-\nabla f(\vx^*)}-\dotp{\vx^{k+1}-\vx^{*},\nabla f(\vx^{k+1})-\nabla f(\vx^*)}\nonumber\\
    \leq&\ \dotp{\vx^{k+1}-\vy^{k+1},\nabla f(\vx^{k+1})-\nabla f(\vx^*)}-\frac{1}{L}\|\nabla f(\vx^{k+1})-\nabla f(\vx^*)\|^2\nonumber\\
    \leq&\ \frac{L}{4}\|\vx^{k+1}-\vy^{k+1}\|^2,
\end{align}
where the second inequality uses the equivalent form of Lipschitz continuous $\nabla f$ from~\cite[Theorem 2.1.5]{nesterov2018lectures}  and the last one uses Cauchy's inequality.

To upper bound $T_2,$ by the same argument as above, we have
\begin{align}
    T_2\leq&\ \frac{L}{4}\|\vx^k-\vy^k\|^2\nonumber\\
    =&\ \frac{L}{4}\|\vx^k-\vx^{k+1}-r\vA^\top(\vs^{k+1}-\vs^k)\|^2,
\end{align}
where the equality is from~\eqref{lem:eq3} and~\eqref{lem2:ineq2} is derived by expanding it.

For $T_1,$ firstly we have
  \begin{align}
  T_1 =&\ \dotp{\vs^{k+1}-\vs^k, \frac{r}{\lambda}(\vI-\lambda \vA\vA^\top)(\vs^k-\vs^{k-1})-\frac{r}{\lambda}(\vI-\lambda \vA\vA^\top)(\vs^{k+1}-\vs^k)}\nonumber\\
  &\ -\dotp{\vs^{k+1}-\vs^k,\vq_h^{k+1}-\vq_h^{k}}\nonumber\\
  \leq &\ \dotp{\vs^{k+1}-\vs^k, \frac{r}{\lambda}\vM(\vs^k-\vs^{k-1}-(\vs^{k+1}-\vs^k))}\nonumber\\
  &-(1-\theta)r\dotp{\vs^{k+1}-\vs^k,\vA\vA^\top(\vs^k-\vs^{k-1}-(\vs^{k+1}-\vs^k))}\nonumber\\
  =&\ \frac{r}{2\lambda}\|\vs^k-\vs^{k-1}\|^2_{\vM}-\frac{r}{2\lambda}\|\vs^{k+1}-\vs^k\|^2_{\vM}-\frac{r}{2\lambda}\|\vs^{k+1}+\vs^{k-1}-2\vs^{k}\|^2_{\vM}\nonumber\\
  &\ -(1-\theta)r\dotp{\vA^\top(\vs^{k+1}-\vs^k),\vA^\top(\vs^k-\vs^{k-1}-(\vs^{k+1}-\vs^k))}\nonumber\\
  \leq&\ \frac{r}{2\lambda}\|\vs^k-\vs^{k-1}\|^2_{\vM}-\frac{r}{2\lambda}\|\vs^{k+1}-\vs^k\|^2_{\vM}+\frac{1}{2}(1-\theta)r\|\vA^\top(\vs^{k}-\vs^{k-1})\|^2\nonumber\\
   &\ +\frac{3}{2}(1-\theta)r\|\vA^\top(\vs^{k+1}-\vs^k)\|^2\label{lem2:ineq4},
  \end{align}
where the first inequality uses the firm nonexpansiveness of $\prox_{\frac{\lambda}{r}h^*}$ from~\cite[Proposition 4.2]{bauschke2011convex}, the second equality uses
$$\dotp{a-b, c-d}=\frac{1}{2}(\|a-d\|^2-\|a-c\|^2+\|b-c\|^2-\|b-d\|^2)$$
and the last step uses Chauchy's inequality.

On the other hand, we also have
\begin{align}
    T_1=&\ 2\dotp{\frac{1}{\sqrt{2}}\vA(\vs^{k+1}-\vs^k),\frac{1}{\sqrt{2}}(\vx^{k}-\vx^{k+1})}\nonumber\\
    \leq&\ 2\dotp{\sqrt{(\theta-\frac{1}{2})}\vA(\vs^{k+1}-\vs^k),\sqrt{(\frac{5}{2}-2\theta)}(\vx^{k}-\vx^{k+1})}\nonumber\\
    \leq&\ (\theta-\frac{1}{2})\|\vA(\vs^{k+1}-\vs^k)\|^2+(\frac{5}{2}-2\theta)\|\vx^k-\vx^{k+1}\|^2,\label{lem2:ineq5}
\end{align}
where the first inequality holds since for any $\theta\in(\frac{3}{4},1]$
$$(\theta-\frac{1}{2})(\frac{5}{2}-2\theta)-\frac{1}{4}=-2\theta^2+\frac{7}{2}\theta-\frac{3}{2}=2(\theta-\frac{3}{4})(1-\theta)\geq0.$$

Therefore, 
$$T_1\leq \frac{1}{2}\times\eqref{lem2:ineq4}+\frac{1}{2}\times\eqref{lem2:ineq5},$$
which gives the inequality~\eqref{lem2:ineq1}. \qed
\end{proof}

\begin{theorem}\label{thm}
Let the sequence $\{(\vs^k, \vx^k,\zeta^k)\}$ be generated by the base algorithm,  then $\{(\vs^k,\vx^k)\}$ converges to a solution of the problem~\eqref{pb2}, if
$$r<\frac{4\theta-3}{2\theta-1}\frac{2}{L},\ \   \lambda\leq\frac{1}{\theta\sigma^2}$$
for any $\theta\in(3/4,1].$
\end{theorem}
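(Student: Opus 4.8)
The plan is to convert the two identities of Lemma~\ref{lem1}, together with the estimates of Lemma~\ref{lem2}, into a one-step decrease of a Lyapunov function, and then close with a standard Fej\'er/Opial argument. First I would record the subgradient inclusions that make the monotonicity arguments available: from the definitions of $\vy^{k+1},\vq_g^{k+1}$ and~\eqref{iter:c} one has $r\vq_g^{k+1}=\vx^{k+1}-r\vA^\top\vs^{k+1}-r\nabla f(\vx^{k+1})-\vy^{k+1}\in r\partial g(\vy^{k+1})$, i.e.\ $\vq_g^{k+1}\in\partial g(\vy^{k+1})$, and from~\eqref{iter:a} and the definition of $\vq_h^{k+1}$ one gets $\vq_h^{k+1}\in\partial h^*(\vs^{k+1})$; by Lemma~\ref{lem} the fixed point gives $\vq_g^*\in\partial g(\vx^*)=\partial g(\vy^*)$ and $\vq_h^*=\vA\vx^*\in\partial h^*(\vs^*)$. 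Monotonicity of $\partial g$ then yields $\dotp{\vy^k-\vy^*,\vq_g^k-\vq_g^*}\ge0$ and $\dotp{\vy^{k+1}-\vy^*,\vq_g^{k+1}-\vq_g^*}\ge0$ (monotonicity of $\partial h^*$ between consecutive iterates is already used inside Lemma~\ref{lem2}).

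Next I would average equalities~\eqref{lem:eq1} and~\eqref{lem:eq2}. On the left-hand side the two $\partial g$-inner products are nonnegative by the previous step and may be dropped, leaving only $\dotp{\vs^{k+1}-\vs^k,\vq_h^{k+1}-\vq_h^*}$; using $\vq_h^{k+1}-\vq_h^*=\vA(\vx^{k+1}-\vx^*)+\tfrac r\lambda(\vI-\lambda\vA\vA^\top)(\vs^k-\vs^{k+1})$ (the rewriting of $\vq_h^{k+1}$ in the proof of Lemma~\ref{lem1} and $\vq_h^*=\vA\vx^*$) this term becomes a combination of $\vx$- and $\vs$-increments that merges with the $\vA$-cross term already present on the right. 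The right-hand side now consists of the telescoping inner products $\tfrac1{2r}\dotp{\vx^{k+1}-\vx^*,\vx^k-\vx^{k+1}}$, $\tfrac1{2r}\dotp{\zeta^{k+1}-\zeta^*,\zeta^k-\zeta^{k+1}}$, $\tfrac r\lambda\dotp{\vs^{k+1}-\vs^*,\vs^k-\vs^{k+1}}$, a multiple of $\dotp{\vA^\top(\vs^{k+1}-\vs^*),\vA^\top(\vs^{k+1}-\vs^k)}$, and the source term $\tfrac12(T_1+T_2+T_3)$. Into the last one I substitute Lemma~\ref{lem2}: writing $T_2$ as in~\eqref{lem2:ineq2} gives $\tfrac12(T_1+T_2)\le\tfrac12\bigl(1-\tfrac{rL}2\bigr)T_1+\tfrac L8\|\vx^k-\vx^{k+1}\|^2+\tfrac{r^2L}8\|\vA^\top(\vs^k-\vs^{k+1})\|^2$, and since $r<\tfrac{4\theta-3}{2\theta-1}\tfrac2L$ one has $1-\tfrac{rL}2>\tfrac{2(1-\theta)}{2\theta-1}\ge0$, so the remaining $T_1$ may be bounded by~\eqref{lem2:ineq1}; bounding $T_3$ by~\eqref{lem2:ineq3} and expanding $\|\zeta^k-\zeta^{k+1}\|^2$ in increments (as in Lemma~\ref{lem2}) leaves everything expressed through $\vM$-weighted telescoping increments $\|\vs^k-\vs^{k-1}\|_\vM^2-\|\vs^{k+1}-\vs^k\|_\vM^2$, the squares $\|\vA^\top(\vs^{k+1}-\vs^k)\|^2$, $\|\vA^\top(\vs^k-\vs^{k-1})\|^2$, and a nonnegative multiple of $\tfrac1r\|\vx^k-\vx^{k+1}\|^2$.

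Applying the polarization identities $\dotp{a-b,c-a}=\tfrac12(\|c-b\|^2-\|a-b\|^2-\|a-c\|^2)$ and $\dotp{a-b,a-c}=\tfrac12(\|a-b\|^2+\|a-c\|^2-\|b-c\|^2)$ and regrouping, the inequality should take the form $\Phi^{k+1}\le\Phi^k-D^{k+1}$, where $\Phi^k$ collects a squared distance of the current iterate to $(\vx^*,\vs^*)$ in the $\zeta$/$\vs$ variables, the correction $-\tfrac r2\|\vA^\top(\vs^k-\vs^*)\|^2$, and the stored increments $\tfrac r{4\lambda}\|\vs^k-\vs^{k-1}\|_\vM^2$ and a nonnegative multiple of $r\|\vA^\top(\vs^k-\vs^{k-1})\|^2$, while $D^{k+1}$ is a combination of $\|\zeta^k-\zeta^{k+1}\|^2$, $\|\vs^{k+1}-\vs^k\|_\vM^2$, $\|\vA^\top(\vs^{k+1}-\vs^k)\|^2$ and $\tfrac1r\|\vx^k-\vx^{k+1}\|^2$ (the precise coefficients and the index bookkeeping on $\zeta$ versus $\vx$ are routine, using $\zeta^k-\zeta^*=(\vx^{k+1}-\vx^*)+r\vA^\top(\vs^{k+1}-\vs^*)$). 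I expect the main obstacle to be verifying that $\Phi^k\ge0$ and $D^{k+1}\ge0$: positivity of $\Phi^k$ is delicate because $\vI-\lambda\vA\vA^\top$ need not be positive semidefinite once $\theta<1$, and it is exactly the slack $\vM=\vI-\theta\lambda\vA\vA^\top\succcurlyeq\vzero$ (from $\theta\lambda\sigma^2\le1$) together with the elementary bound $2(\theta-\tfrac34)(1-\theta)\ge0$ used in Lemma~\ref{lem2} that makes the relevant quadratic form positive semidefinite — this is where $\theta>3/4$ enters — whereas $D^{k+1}\ge0$ is where $r<\tfrac{4\theta-3}{2\theta-1}\tfrac2L$ (equivalently $1-\tfrac{rL}2>\tfrac{2(1-\theta)}{2\theta-1}$) is used; this reduces to a Schur-complement/sum-of-squares check on a small symmetric matrix in the variables $\vA^\top(\vs^{k+1}-\vs^*)$, $\vA^\top(\vs^{k+1}-\vs^k)$, $\vA^\top(\vs^k-\vs^{k-1})$, $\vx^k-\vx^{k+1}$.

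Finally, from $\Phi^{k+1}\le\Phi^k-D^{k+1}$ with $\Phi^k\ge0$ and $D^{k+1}\ge0$ the sequence $\{\Phi^k\}$ converges, so $\{\vs^k\},\{\zeta^k\}$ (hence $\{\vx^k\}$ via~\eqref{iter:b}) are bounded and $\sum_kD^{k+1}<\infty$, giving $\vs^{k+1}-\vs^k\to\vzero$, $\zeta^{k+1}-\zeta^k\to\vzero$, $\vx^{k+1}-\vx^k\to\vzero$ and $\vA^\top(\vs^{k+1}-\vs^k)\to\vzero$. Taking a subsequential limit $(\bar{\vs},\bar{\vx},\bar{\zeta})$ and passing to the limit in~\eqref{iter:a}--\eqref{iter:c}, using closedness of $\gra\partial g$ and $\gra\partial h^*$ and continuity of $\nabla f$, shows $(\bar{\vs},\bar{\vx},\bar{\zeta})$ is a fixed point of~\eqref{algo}, hence by Lemma~\ref{lem} $(\bar{\vx},\bar{\vs})$ solves~\eqref{pb2}. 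Re-running the above with $(\vx^*,\vs^*)=(\bar{\vx},\bar{\vs})$, the associated $\Phi^k$ converges and vanishes along the subsequence, hence $\Phi^k\to0$, so the whole sequence $(\vs^k,\vx^k)\to(\bar{\vx},\bar{\vs})$ — the usual Opial argument.
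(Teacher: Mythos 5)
Your proposal follows essentially the same route as the paper's proof: combine the two identities of Lemma~\ref{lem1} with the three bounds of Lemma~\ref{lem2}, assemble a Lyapunov function $\Phi^k$ that stores the $\vM$-weighted increment $\|\vs^k-\vs^{k-1}\|_{\vM}^2$ and a $\|\zeta^{k-1}-\zeta^k\|^2$ term, check positivity and descent exactly where you locate them ($\theta\lambda\sigma^2\le 1$ for $\vM\succcurlyeq\vzero$, the factorization $2(\theta-\tfrac34)(1-\theta)\ge0$, and $1-\tfrac{rL}{2}>\tfrac{2(1-\theta)}{2\theta-1}$), and finish with the subsequence/Opial argument. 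The only differences are cosmetic: the paper combines the two identities with weights $(1,\alpha)$ for a $\theta$-dependent parameter $\alpha$ rather than averaging them, and it disposes of the $h^*$ cross term directly via the firm nonexpansiveness of $\prox_{\frac{\lambda}{r}h^*}$, i.e.\ $\dotp{\vs^{k+1}-\vs^*,\vq_h^{k+1}-\vq_h^*}\ge 0$, rather than merging it into the right-hand side — both choices lead to the same stepsize condition.
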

\begin{proof}{\it Descent Inequality.} 
From the assumption on $\lambda,$ there exists $\widetilde{\theta}\in(0,\theta)$ such that
$$\vI-\widetilde{\theta}\lambda\vA\vA^\top\succ\vzero.$$
Define a parameter $\alpha$ depending on $\theta$ and $\widetilde{\theta}$ as
\begin{equation}
    \alpha=\left\{\begin{aligned}
    \frac{\widetilde{\theta}}{1-\theta},&\ \ \frac{3}{4}<\theta<1,\\
    0,&\ \ \theta=1,
    \end{aligned}\right.
\end{equation}
and let $\widetilde{\vM}=\vI-\alpha(1-\theta)\vA\vA^\top\succ\vzero$.

Combining Lemma~\ref{lem1} and Lemma~\ref{lem2}, for any $r\in(0,2/L)$ we have
\begin{align}
    &\ \dotp{\vy^k-\vy^*,\vq_g^k-\vq_g^*}+\dotp{\vs^{k+1}-\vs^k,\vq_h^{k+1}-\vq_h^*}\nonumber\\
    =&\ \frac{1}{r}\dotp{\vx^{k+1}-\vx^*,\vx^k-\vx^{k+1}}+\frac{r}{\lambda}\dotp{\vs^{k+1}-\vs^*,\vs^k-\vs^{k+1}}+T_1+T_2\nonumber\\
    \leq&\ \frac{1}{2r}\|\vx^{k}-\vx^*\|^2-\frac{1}{2r}\|\vx^{k+1}-\vx^*\|^2-\frac{1}{2r}\|\vx^k-\vx^{k+1}\|^2\nonumber\\
    &\ +\frac{r}{2\lambda}\|\vs^{k}-\vs^*\|^2-\frac{r}{2\lambda}\|\vs^{k+1}-\vs^*\|^2-\frac{r}{2\lambda}\|\vs^{k+1}-\vs^k\|^2\nonumber\\
    &\ +\alpha(\frac{1}{2r}-\frac{L}{4})\|\zeta^{k-1}-\zeta^k\|^2+(\frac{L}{4}-\alpha(\frac{1}{2r}-\frac{L}{4}))\|\zeta^{k-1}-\zeta^{k}\|^2\nonumber\\
    &\ +T_1\nonumber\\
    \leq&\ \frac{1}{2r}\|\vx^{k}-\vx^*\|^2-\frac{1}{2r}\|\vx^{k+1}-\vx^*\|^2-\frac{1}{2r}\|\vx^k-\vx^{k+1}\|^2\nonumber\\
    &\ +\frac{r}{2\lambda}\|\vs^{k}-\vs^*\|^2-\frac{r}{2\lambda}\|\vs^{k+1}-\vs^*\|^2-\frac{r}{2\lambda}\|\vs^{k+1}-\vs^k\|^2\nonumber\\
    &\ +\alpha(\frac{1}{2r}-\frac{L}{4})\|\zeta^{k-1}-\zeta^k\|^2+(\frac{L}{4}-\alpha(\frac{1}{2r}-\frac{L}{4}))\|\vx^k-\vx^{k+1}\|^2\nonumber\\
    &\ +(\frac{L}{4}-\alpha(\frac{1}{2r}-\frac{L}{4}))r^2\|\vA^\top(\vs^k-\vs^{k+1})\|^2+(1+\alpha)(1-\frac{rL}{2})T_1.\label{thm:ineq1}
    \end{align}
    
    Note that \begin{align}
        (1+\alpha)(1-\frac{rL}{2})T_1\leq&\ (1+\alpha)(1-\frac{rL}{2})\Big[\frac{r}{4\lambda}\|\vs^k-\vs^{k-1}\|^2_{\vM}-\frac{r}{4\lambda}\|\vs^{k+1}-\vs^k\|^2_{\vM}\nonumber\\
        &\ +\frac{1}{4}(1-\theta)r\|\vA^\top(\vs^k-\vs^{k-1})\|^2\nonumber\\
    &\ +(\frac{1}{2}-\frac{1}{4}\theta)r\|\vA^\top(\vs^{k+1}-\vs^k)\|^2-(\theta-\frac{3}{4})\frac{1}{r}\|\vx^{k}-\vx^{k+1}\|^2\Big]\nonumber\\
    &\ +(1+\alpha)(1-\frac{rL}{2})\frac{1}{2r}\|\vx^{k}-\vx^{k+1}\|^2.\label{thm:ineq2}
    \end{align}
    
    Combine~\eqref{thm:ineq1},~\eqref{thm:ineq2}, and
    $$-\frac{1}{2r}+\frac{L}{4}-\alpha(\frac{1}{2r}-\frac{L}{4})+(1+\alpha)(\frac{1}{2r}-\frac{L}{4})=0,$$
    we get
    \begin{align}\label{thm:ineq3}
        &\ \dotp{\vy^k-\vy^*,\vq_g^k-\vq_g^*}+\dotp{\vs^{k+1}-\vs^k,\vq_h^{k+1}-\vq_h^*}\nonumber\\
        \leq&\ \frac{1}{2r}\|\vx^{k}-\vx^*\|^2-\frac{1}{2r}\|\vx^{k+1}-\vx^*\|^2+\frac{r}{2\lambda}\|\vs^{k}-\vs^*\|^2-\frac{r}{2\lambda}\|\vs^{k+1}-\vs^*\|^2\nonumber\\
    &\ +\alpha(\frac{1}{2r}-\frac{L}{4})\|\zeta^{k-1}-\zeta^k\|^2-\frac{r}{2\lambda}\|\vs^{k+1}-\vs^k\|^2\nonumber\\
    &\ +(\frac{L}{4}-\alpha(\frac{1}{2r}-\frac{L}{4}))r^2\|\vA^\top(\vs^k-\vs^{k+1})\|^2\nonumber\\
    &\ (1+\alpha)(1-\frac{rL}{2})\Big[\frac{r}{4\lambda}\|\vs^k-\vs^{k-1}\|^2_{\vM}-\frac{r}{4\lambda}\|\vs^{k+1}-\vs^k\|^2_{\vM}\nonumber\\
        &\ +\frac{1}{4}(1-\theta)r\|\vA^\top(\vs^k-\vs^{k-1})\|^2-\frac{1}{4}(1-\theta)r\|\vA^\top(\vs^{k+1}-\vs^{k})\|^2\nonumber\\
        &\ +(\frac{3}{4}-\frac{1}{2}\theta)r\|\vA^\top(\vs^{k+1}-\vs^k)\|^2-(\theta-\frac{3}{4})\|\vx^k-\vx^{k+1}\|^2\Big].
    \end{align}
    
The other inequality in Lemma~\ref{lem1} becomes
\begin{align}\label{thm:ineq4}
 &\ \dotp{\vy^{k+1}-\vy^*,\vq_g^{k+1}-\vq_g^*}+\dotp{\vs^{k+1}-\vs^k,\vq_h^{k+1}-\vq_h^*}\nonumber \\
    =&\ \frac{1}{r}\dotp{\zeta^{k+1}-\zeta^*,\zeta^k-\zeta^{k+1}}+\frac{r}{\lambda}\dotp{\vs^{k+1}-\vs^*,\vM(\vs^k-\vs^{k+1})}\nonumber\\
    &\ -(1-\theta)r\frac{r}{\lambda}\dotp{\vA^\top(\vs^{k+1}-\vs^*),\vA^\top(\vs^k-\vs^{k+1})}+T_3\nonumber\\
    \leq&\ \frac{1}{2r}\|\zeta^{k}-\zeta^*\|^2-\frac{1}{2r}\|\zeta^{k+1}-\zeta^*\|^2-\frac{1}{2r}\|\zeta^{k+1}-\zeta^k\|^2\nonumber\\
    &\ +\frac{r}{2\lambda}\|\vs^{k}-\vs^*\|^2_{\vM}-\frac{r}{2\lambda}\|\vs^{k+1}-\vs^*\|^2_{\vM}-\frac{r}{2\lambda}\|\vs^{k+1}-\vs^k\|^2_{\vM}\nonumber\\
    &\ -(1-\theta)\frac{r}{2}\|\vA^\top(\vs^{k}-\vs^*)\|^2+(1-\theta)\frac{r}{2}\|\vA^\top(\vs^{k+1}-\vs^*)\|^2\nonumber\\
    &\ +(1-\theta)\frac{r}{2}\|\vA^\top(\vs^{k+1}-\vs^k)\|^2+\frac{L}{4}\|\zeta^k-\zeta^{k+1}\|^2.
\end{align}

Let $\beta=(1+\alpha)(1-\frac{rL}{2})$ for simplicity. We have 
\begin{equation*}\begin{aligned}
    \Phi^{k+1}=&\ \frac{1}{2r}\|\vx^{k+1}-\vx^*\|^2+\frac{r}{2\lambda}\|\vs^{k+1}-\vs^*\|^2_{\vM+\widetilde{\vM}}+\frac{1}{2r}\|\zeta^{k+1}-\zeta^*\|^2\\
    &\ +\frac{\beta r}{4\lambda}\|\vs^{k+1}-\vs^{k}\|^2_{\vM}+\frac{\beta r(1-\theta)}{4}\|\vs^{k+1}-\vs^k\|^2_{\vA\vA^\top}\\
    &\ +\frac{\alpha}{2r}(1-\frac{rL}{2})\|\zeta^{k}-\zeta^{k+1}\|^2.
    \end{aligned}
\end{equation*}
Considering $\eqref{thm:ineq3}+\alpha\times\eqref{thm:ineq4}$, we have  
\begin{align}
    &\ \dotp{\vy^k-\vy^*,\vq_g^k-\vq_g^*}+(1+\alpha)\dotp{\vs^{k+1}-\vs^k,\vq_h^{k+1}-\vq_h^*}+\alpha\dotp{\vy^{k+1}-\vy^*,\vq_g^{k+1}-\vq_g^*}\nonumber\\
    \leq&\ \Phi^{k}-\Phi^{k+1}-\beta(\theta-\frac{3}{4})\|\vx^k-\vx^{k+1}\|^2-\frac{r}{2\lambda}\|\vs^{k+1}-\vs^k\|^2_{\vM}\nonumber\\
    &\ -\frac{r}{2\lambda}\|\vs^{k+1}-\vs^k\|^2_{\vI-\alpha(1-\theta)\lambda\vA\vA^\top}+(\frac{3}{4}-\frac{1}{2}\theta)\beta r\|\vs^{k+1}-\vs^k\|^2_{\vA\vA^\top}\nonumber\\
    &\ +(\frac{rL}{2}-\alpha(1-\frac{rL}{2}))\frac{r}{2}\|\vs^k-\vs^{k+1}\|_{\vA\vA^\top}^2\nonumber\\
    =&\ \Phi^{k}-\Phi^{k+1}-\beta(\theta-\frac{3}{4})\|\vx^k-\vx^{k+1}\|^2-\frac{r}{2\lambda}\|\vs^{k+1}-\vs^k\|^2_{\vM}\nonumber\\
    &\ -\frac{r}{2\lambda}\|\vs^{k+1}-\vs^k\|^2_{\vN},
\end{align}
where
$$\vN=\vI-\Big[\alpha(1-\theta)+(\frac{3}{2}-\theta)\beta+\frac{rL}{2}-\alpha(1-\frac{rL}{2})\Big]\lambda\vA\vA^\top.$$

Note that if we let 
$$\alpha(1-\theta)+(\frac{3}{2}-\theta)(1+\alpha)(1-\frac{rL}{2})+\frac{rL}{2}-\alpha(1-\frac{rL}{2})< \theta,$$
which is equivalent to 
$$\frac{rL}{2}<\frac{4\theta-3}{2\theta-1},$$
we have $\vN\succ\vM\succcurlyeq\vzero.$

Due to $\widetilde{\vM}\succ\vzero$ and  the firm nonexpansiveness of $\prox_{rg}$ and $\prox_{\frac{\lambda}{r}h^*}$, from~\cite[Proposition 4.2]{bauschke2011convex}, we get the descent inequality
\begin{equation}\Phi^{k+1}\leq\Phi^k-\beta(\theta-\frac{3}{4})\|\vx^{k+1}-\vx^{k}\|^2-\frac{r}{2\lambda}\|\vs^{k+1}-\vs^k\|^2_{\vM+\vN}.
\end{equation} 

\paragraph{Convergence.} Taking the telescopic sum from $k=0$ to $\infty$, we get
\begin{align}
    &\lim_{k\rightarrow\infty}\|\vx^{k+1}-\vx^k\|=0, \label{conv_x}\\
    &\lim_{k\rightarrow\infty}\|\vs^{k+1}-\vs^k\|=0, \label{conv_s}
\end{align}
due to $\theta>\frac{3}{4}$ and $\vM+\vN\succ\vzero.$ Moreover, from~\eqref{iter:b}, we get
\begin{equation}
    \lim_{k\rightarrow\infty}\|\zeta^{k+1}-\zeta^k\|=0 \label{conv_zeta}.
\end{equation}
From the descent inequality, the nonnegative sequence $\{\Phi^k\}$ is nonincreasing, so it converges to a nonnegative constant, which implies $(\vs^k,\vx^k,\zeta^k)$ is bounded in $\RR^m\times\RR^n\times\RR^n$. Due to the compactness, there is a subsequence, $\{(\vs^{k_j}, \vx^{k_j}, \zeta^{k_j})\}$ converging to $(\vs^\star,\vx^\star,\zeta^\star)$.
 
 The nonexpansiveness of $\prox_{rg}$ and $\prox_{\frac{\lambda}{r}h^*}$ implies they are continuous and from~\eqref{conv_x},~\eqref{conv_s} and~\eqref{conv_zeta}, 
 $$\begin{aligned}
 &\ \lim_{j\rightarrow\infty}\|(\vs^{k_j+1},\vx^{k_j+1},\zeta^{k_j+1})-(\vs^\star,\vx^\star,\zeta^\star)\|\\
 \leq&\ \lim_{j\rightarrow\infty}\|(\vs^{k_j+1},\vx^{k_j+1},\zeta^{k_j+1})-(\vs^{k_j}, \vx^{k_j}, \zeta^{k_j})\|+\|(\vs^{k_j},\vx^{k_j},\zeta^{k_j})-(\vs^\star,\vx^\star,\zeta^\star)\|\\
 =&\ 0.
 \end{aligned}$$
So, $(\vs^\star,\vx^\star,\zeta^\star)$ is a fixed point of the iteration~\eqref{algo}.
 
By choosing $(\vs^*,\vx^*,\zeta^*)=(\vs^\star,\vx^\star,\zeta^\star),$ we have
 $$\lim_{j\rightarrow\infty}\Phi^{k_j}=0.$$
Hence
 $$\lim_{j\rightarrow\infty}(\vs^k,\vx^k,\zeta^k)=(\vs^\star,\vx^\star,\zeta^\star).$$
Lastly, from Lemma~\ref{lem}, $(\vs^\star,\vx^\star)$ is a solution of the problem~\eqref{pb2}.
\qed
\end{proof}

Due to the relation discussed in Section~\ref{connect:afba} and \ref{connect:pd3o}, we also prove the convergence of AFBA and PD3O under the relaxed condition. 
When $f=0$, the relation in Section~\ref{connect:cp} implies the improved convergence result of Chambolle-Pock.
\begin{corollary}[Chambolle-Pock]
Suppose $r>0$ and $\lambda<\frac{4}{3\sigma^2}$, we have the following results hold,
\begin{enumerate}
\item Let $\{(\vs^k,\vx^k,\zeta^k)\}$ be generated by the iteration~\eqref{algo}, then $(\vs^k,\vx^k)$ converges to a solution of the problem~\eqref{pb2} with linear $f$.
\item Let $\{(\vs_{3}^k,\vx_3^k)\}$ be generated by Chambolle-Pock~\eqref{cp}, then it converges to a solution of the problem~\eqref{pb2} with $f=0$.
\end{enumerate}
\end{corollary}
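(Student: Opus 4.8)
The plan is to derive the Corollary entirely as a specialization of Theorem~\ref{thm}, reading off the limiting case $L\to 0$. First I would observe that when $f$ is linear (in particular when $f=0$), the Lipschitz constant of $\nabla f$ can be taken to be $L=0$, so the constraint $r<\frac{4\theta-3}{2\theta-1}\frac{2}{L}$ becomes vacuous: for every fixed $\theta\in(3/4,1]$ the right-hand side is $+\infty$, hence any $r>0$ is admissible. The remaining constraint $\lambda\le\frac{1}{\theta\sigma^2}$ then just needs to be optimized over $\theta$. Since $\frac{1}{\theta\sigma^2}$ is decreasing in $\theta$, the largest admissible product of stepsizes is obtained by pushing $\theta\downarrow 3/4$, which gives the open bound $\lambda<\frac{4}{3\sigma^2}$. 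Concretely, given any $\lambda<\frac{4}{3\sigma^2}$, I would pick $\theta\in(3/4,1]$ with $\theta<\frac{1}{\lambda\sigma^2}$ (possible precisely because $\lambda\sigma^2<4/3$), so that $\lambda\le\frac{1}{\theta\sigma^2}$ and $r<\infty=\frac{4\theta-3}{2\theta-1}\frac{2}{L}$ hold; Theorem~\ref{thm} then yields convergence of $\{(\vs^k,\vx^k)\}$ generated by~\eqref{algo} to a solution of~\eqref{pb2}. This proves statement~(1).

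For statement~(2), I would invoke the equivalence established in Section~\ref{connect:cp}: when $f=0$, the dual problem~\eqref{pb2: dual} reduces to~\eqref{pb2: dual2}, and by the sequence correspondence~\eqref{base_cp} the iterates $\{(\vs_3^k,\vx_3^k)\}$ of Chambolle-Pock~\eqref{cp} coincide (under appropriate initialization, namely $\vx^0=\vx_3^0$, $\vs^0=\vs_3^0$, and $\zeta^0=\vx_3^0+r\vA^\top\vs_3^0$) with $\{(\vs^k,\,\zeta^k-r\vA^\top\vs^k)\}$ for the base algorithm. Since $\vx^k=\zeta^{k-1}-r\vA^\top\vs^k$ by~\eqref{iter:b}, and~\eqref{conv_zeta} plus~\eqref{conv_s} give $\zeta^k-\zeta^{k-1}\to 0$, the limit of $\zeta^k-r\vA^\top\vs^k$ coincides with the limit of $\vx^{k+1}$; more simply, one notes that the map $(\vs,\zeta)\mapsto(\vs,\zeta-r\vA^\top\vs)$ is continuous and affine, so convergence of $(\vs^k,\vx^k,\zeta^k)$ to a fixed point $(\vs^\star,\vx^\star,\zeta^\star)$ from Theorem~\ref{thm} transfers to convergence of $(\vs_3^k,\vx_3^k)$ to $(\vs^\star,\zeta^\star-r\vA^\top\vs^\star)=(\vs^\star,\vx^\star)$, which by Lemma~\ref{lem} is a solution of~\eqref{pb2} with $f=0$.

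The only genuinely delicate point is the choice of $\theta$ in the strict-versus-nonstrict inequality bookkeeping: the theorem requires $\lambda\le\frac{1}{\theta\sigma^2}$ (closed) with $\theta>3/4$ (open), whereas the corollary claims $\lambda<\frac{4}{3\sigma^2}$ (open). I would make sure the quantifiers line up: from $\lambda\sigma^2<4/3$ there is slack to select $\theta$ strictly between $3/4$ and $\min\{1,\frac{1}{\lambda\sigma^2}\}$, and with that $\theta$ fixed the theorem's hypotheses are satisfied with room to spare, so there is no boundary issue. Everything else—boundedness of $\{\Phi^k\}$, extraction of a convergent subsequence, identification of the subsequential limit as a fixed point, and upgrading to full-sequence convergence—is inherited verbatim from the proof of Theorem~\ref{thm}, so no new estimates are needed. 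I expect the ``main obstacle'' here to be purely expository: making the reduction airtight rather than overcoming any real analytic difficulty, since the hard analytic work (Lemmas~\ref{lem1} and~\ref{lem2} and the descent inequality) is already done in the general case.
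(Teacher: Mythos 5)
Your proposal is correct and follows essentially the same route as the paper: specialize Theorem~\ref{thm} by exploiting that a linear $f$ has an arbitrarily small (or zero) Lipschitz constant so the bound on $r$ is vacuous, choose $\theta\in(3/4,\,1/(\lambda\sigma^2))$ using the slack from $\lambda\sigma^2<4/3$, and transfer the conclusion to Chambolle--Pock via the correspondence~\eqref{base_cp}. Your explicit fix-$\theta$-first quantifier bookkeeping is, if anything, a slightly cleaner rendering of the paper's ``let $\epsilon\to0$ and $\theta\to3/4$'' step, but it is the same argument.
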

\begin{proof}
~\begin{enumerate}
\item When $f$ is linear, the Lipschitz constant of gradient is $0$. We can set $L=\epsilon>0$, then by Theorem~\ref{thm} if $r<\frac{4\theta-3}{2\theta-1}\frac{2}{\epsilon}$ and $\lambda\leq\frac{1}{\theta\sigma^2},$
$$\lim_{k\rightarrow\infty}(\vs^k,\vx^k)=(\vs^\star,\vx^\star),$$
where $(\vs^\star,\vx^\star)$ is a saddle-point solution.

Let $\epsilon\rightarrow0$ and $\theta\rightarrow 3/4$, then the proof is complete.
\item From the sequence relation~\eqref{base_cp} and the above argument, we have
$$\lim_{k\rightarrow\infty}(\vs^k_3,\vx^k_3)=\lim_{k\rightarrow\infty}(\vs^k,\vx^{k-1}-r\vA^\top(\vs^k-\vs^{k-1}))=(\vs^\star,\vx^\star).$$
\end{enumerate}\qed
\end{proof}

The comparison of aforementioned algorithms with improved conditions on stepsizes is summarized in Table~\ref{table2}, where the results of the algorithm in bold are proved in this paper.

\begin{table}[!ht]
\centering
\begin{tabular}{lllllll}
\hhline{-------}
                                                  &  & $f$        &  & $g$    &  & $r, \lambda$                                                                                   \\ \hhline{=======}
\textbf{Chambolle-Pock}                                     &  & $0$        &  & convex &  & $r>0$, $ \lambda<4/(3\sigma^2)$                                                     \\
PAPC(PDFP$^2$O)                               &  & $L$-smooth &  & $0$    &  & $rL/2<1$, $\lambda<4/(3\sigma^2)$                                          \\
Condat-Vu                                        &  & $L$-smooth &  & convex &  & $\lambda\sigma^2+rL/2\leq 1$                                                      \\
PDFP                                            &  & $L$-smooth &  & convex &  & $rL/2<1$, $\lambda\sigma^2(\vA)\leq 1$                                                   \\
\textbf{AFBA}                                            &  & $L$-smooth &  & convex &  & $rL/2<\Gamma$,  $\theta\lambda\sigma^2\leq 1$        \\
\textbf{PD3O}                                             &  & $L$-smooth &  & convex &  & $rL/2<\Gamma$, $  \theta\lambda\sigma^2\leq 1$                                               \\
\textbf{The Base Algorithm} &  & $L$-smooth &  & convex &  & $rL/2<\Gamma$, $  \theta\lambda\sigma^2\leq 1$ 
\\ \hline
\end{tabular}
\caption{The comparison of the requirement of stepsizes to guarantee the convergence of some primal-dual algorithms. $L$-smooth means the function is convex and has a $L$-Lipschitz continuous gradient. $\Gamma\coloneqq (4\theta-3)/(2\theta-1)$, where $\theta$ is an arbitrary number in $(3/4,1].$ }
\label{table2}
\end{table}

\subsection{Tightness of Upper Bound for Stepsizes}\label{tight}
In this section, we provide a simple example to show that the upper bound for $\lambda$ can not be relaxed further. This example is more general than the one provide in~\cite{he2020optimally}. We consider the following saddle point problem 
$$\min_{\vx\in\RR^n}\max_{\vs\in\RR^m}~\langle \vA\vx,\vs\rangle.$$
Any $(\vx,\vs)$ such that $\vA\vx=\vzero$ and $\vA^\top\vs=\vzero$ is a solution. 
For this special problem, one iteration becomes 
\begin{align}
    \vs^{k+1} =&~ {\lambda\over r}\vA\vx^k+(\vI-\lambda\vA\vA^\top)\vx^k,\\
    \vx^{k+1}=&~\vx^k-r\vA^\top\vs^{k+1}=(\vI-\lambda\vA^\top\vA)\vx^k-r \vA^\top(\vI-\lambda\vA\vA^\top)\vs^k,
\end{align}
We can rewrite it as 
\begin{align}
    \begin{bmatrix}
    \vs^{k+1}\\\vA\vx^{k+1}
    \end{bmatrix}=
    \begin{bmatrix}
    \vI-\lambda\vA\vA^\top &{\lambda\over r}\vI\\-r\vA\vA^\top(\vI-\lambda\vA\vA^\top)&\vI-\lambda\vA\vA^\top
    \end{bmatrix}
        \begin{bmatrix}
    \vs^{k}\\\vA\vx^{k}
    \end{bmatrix}.
\end{align}
Therefore, to make the algorithm converge, the eigenvalues of the matrix can not have magnitude larger than 1. Because $\vA\vA^\top$ is symmetric, we only need to consider the $2\times 2$ matrix 
\begin{align}
    \begin{bmatrix}
        1-\lambda\theta & {\lambda\over r} \\
        -r \theta(1-\lambda \theta) & 1-\lambda \theta
    \end{bmatrix},
\end{align}
where $\theta$ is the eigenvalues of $\vA\vA^\top$. Its two eigenvalues are 
$$1-\lambda\theta\pm\sqrt{-\lambda\theta(1-\lambda\theta)}.$$

We consider different cases for $\lambda\theta$. If $\lambda\theta< 1$, both eigenvalues are complex numbers, and their magnitude is 
$\sqrt{(1-\lambda\theta)^2+\lambda\theta(1-\lambda\theta)}\leq 1$. If $\lambda\theta=1$, both eigenvalues are zero. When $\lambda\theta>1$, both eigenvalues are real number. The eigenvalue $1-\lambda\theta+\sqrt{-\lambda\theta(1-\lambda\theta)}<1-\lambda\theta+\sqrt{\lambda\theta\lambda\theta}=1.$ The other eigenvalue is $1-\lambda\theta-\sqrt{\lambda\theta(\lambda\theta-1)}$. To make sure that its magnitude is less than one, we need $1-\lambda\theta-\sqrt{\lambda\theta(\lambda\theta-1)}>-1$, that is $\lambda\theta<4/3.$ The condition for the convergence with any initial value is $\lambda\theta<4/3$ for all eigenvalues of $\vA\vA^\top$, that is $\lambda\sigma^2<4/3$.

This example shows that the condition $\lambda\sigma^2<4/3$ can not be relaxed further for Chambolle-Pock.

\section{Numerical Experiments}\label{sec:exp}
In this section, we demonstrate the performance of several primal-dual algorithms under the relaxed condition and compare their results with existing ones. More specifically, we use PD3O and AFBA to solve the fused LASSO (least absolute shrinkage and selection operator) and Chambolle-Pock to solve LASSO to show their convergence with different combinations of $r$ and $\lambda$. 

\subsection{The fused LASSO}
The fused LASSO problem (see, e.g.,~\cite{tibshirani2005sparsity}) is formulated as
\begin{equation}\label{flasso}
    \Min_{\vx\in\RR^{10000}}\ \frac{1}{2}\|\vK\vx-\vb\|^2+\mu_1\|\vB\vx\|_1+\mu_2\|\vx\|_1
\end{equation}
where $\vK\in\RR^{500\times 10000}$. The two penalty parameters $\mu_1$ and $\mu_2$ are  set to $200$ and $20$, respectively. The $ith$ row of $\vB\in\RR^{9999\times 10000}$ has $-1$ on the $i$th column, $1$ on the $i+1$th column, and $0$ on other columns.

We let $f(\vx)=\frac{1}{2}\|\vK\vx-\vb\|^2,$ $g(\vx)=\mu_2\|\vx\|_1$ and $h(\vB\vx)=\mu_1\|\vB\vx\|_1$. Then the primal-dual form is
\begin{equation}
    \min_{\vx\in\RR^{10000}}\max_{\vs\in\RR^{9999}}\ \frac{1}{2}\|\vK\vx-\vb\|^2+\mu_2\|\vx\|_1+\dotp{\vB\vx,\vs}-\mu_1\chi_{B_{\infty}}\left(\frac{\vs}{\mu_1}\right),
\end{equation}
where $B_{\infty}$ is the closed unit ball in $\ell_{\infty}$ norm and $\chi_{B_{\infty}}$ is the indicator function over $B_{\infty}$. 

We generate a sparse vector $\vx_{\text{True}}$ with $50$ nonzero entries and a dense matrix $\vA$ whose each entry is independently sampled from the standard normal distribution. The response vector $\vb$ is obtained by adding Gaussian noise to $\vA\vx_{\text{True}}$. We calculate the estimated optimal solution by running $10,000$ steps PD3O for the problem and get the estimated optimal function value $f^*$. 

We set the default parameters $r=1/\sigma^2(\vK)$, $\lambda = 1/\sigma^2(\vB)$ and consider several choices of stepsizes in the two scenarios: (1) $\frac{r L}{2}<\frac{4\theta-3}{2\theta-1}, \lambda\leq\frac{1}{\theta\sigma^2};$ (2) $\frac{rL}{2}<1, \lambda<\frac{4}{3\sigma^2}.$  The first scenario obeys the relaxed condition shown in this paper, while the other one may violate the condition. In Fig.~\ref{fig:flasso}, the left figure shows the result with $\theta = 1/1.19$ and $4/5$ for the first scenario. In this figure, we compare four choices of the stepsizes: (1) the default parameter $(r,\lambda)$; (2) we fix the primal stepsize $r$ and choose a small $\theta=1/1.19$ to obtain a large $\lambda$. The new parameter is $(r,1.19\lambda)$; (3) Choose a smaller $\theta=0.8$ and decrease the primal stepsize $r$ only; (4) Choose the same $\theta=0.8$ and increase the parameter $\lambda$ to its upper bound. The right figure in Fig.~\ref{fig:flasso} compare the convergence of algorithms under the second scenario with a larger $\lambda$. Note that, the settings with $1.3\lambda$ do not satisfy the condition in this paper. We observe that in either scenario, the primal stepsize dominates the convergence of algorithms and a slightly larger $\lambda$ has little effect on the algorithm.

\begin{figure*}[!ht]
\centering
\begin{minipage}[t]{0.494\textwidth}
    \centering
\includegraphics[width=1.0\textwidth]{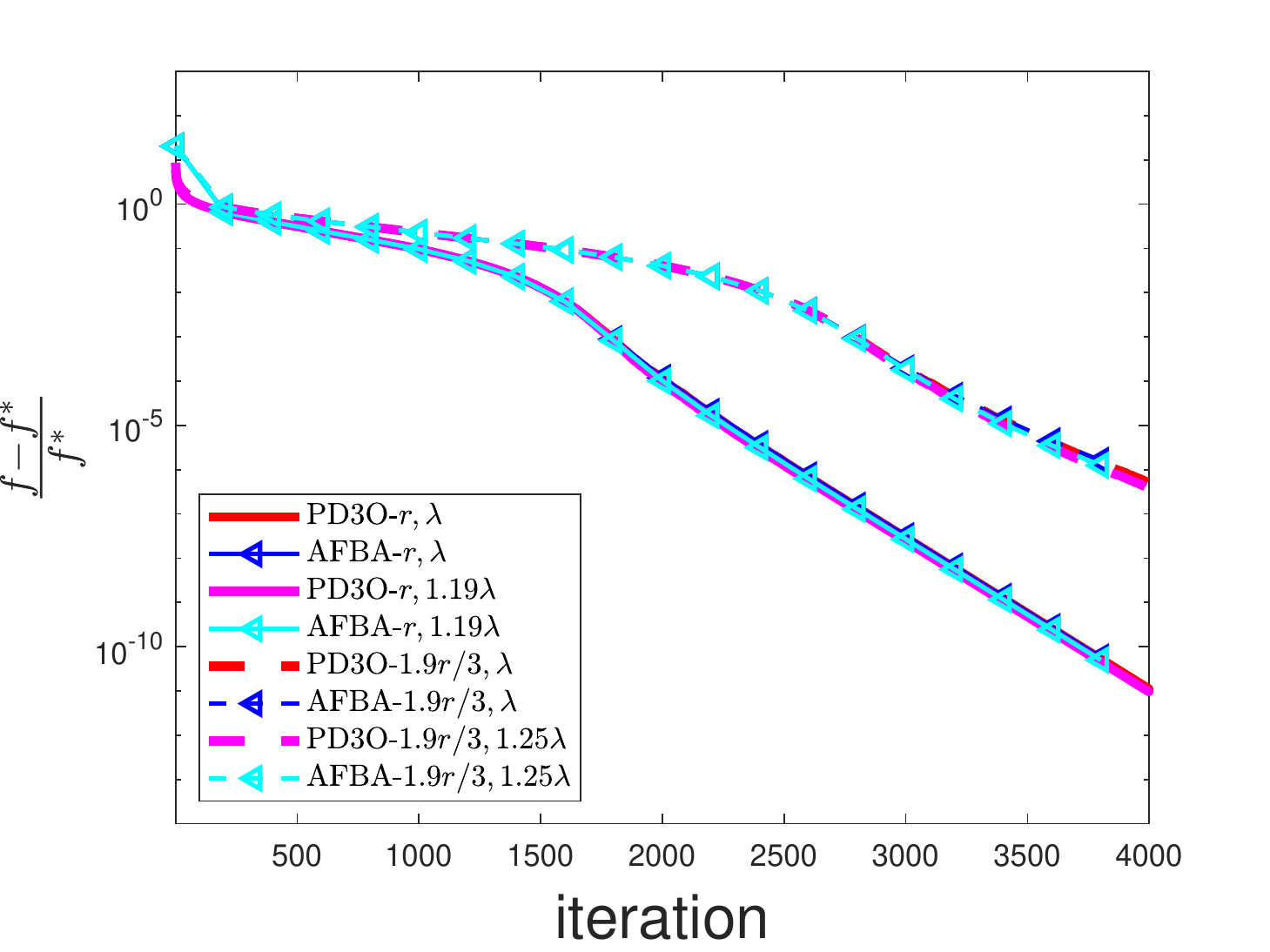}
    \label{fig:flasso_1}
\end{minipage}
\begin{minipage}[t]{0.494\textwidth}
    \centering
    \includegraphics[width=1.0\textwidth]{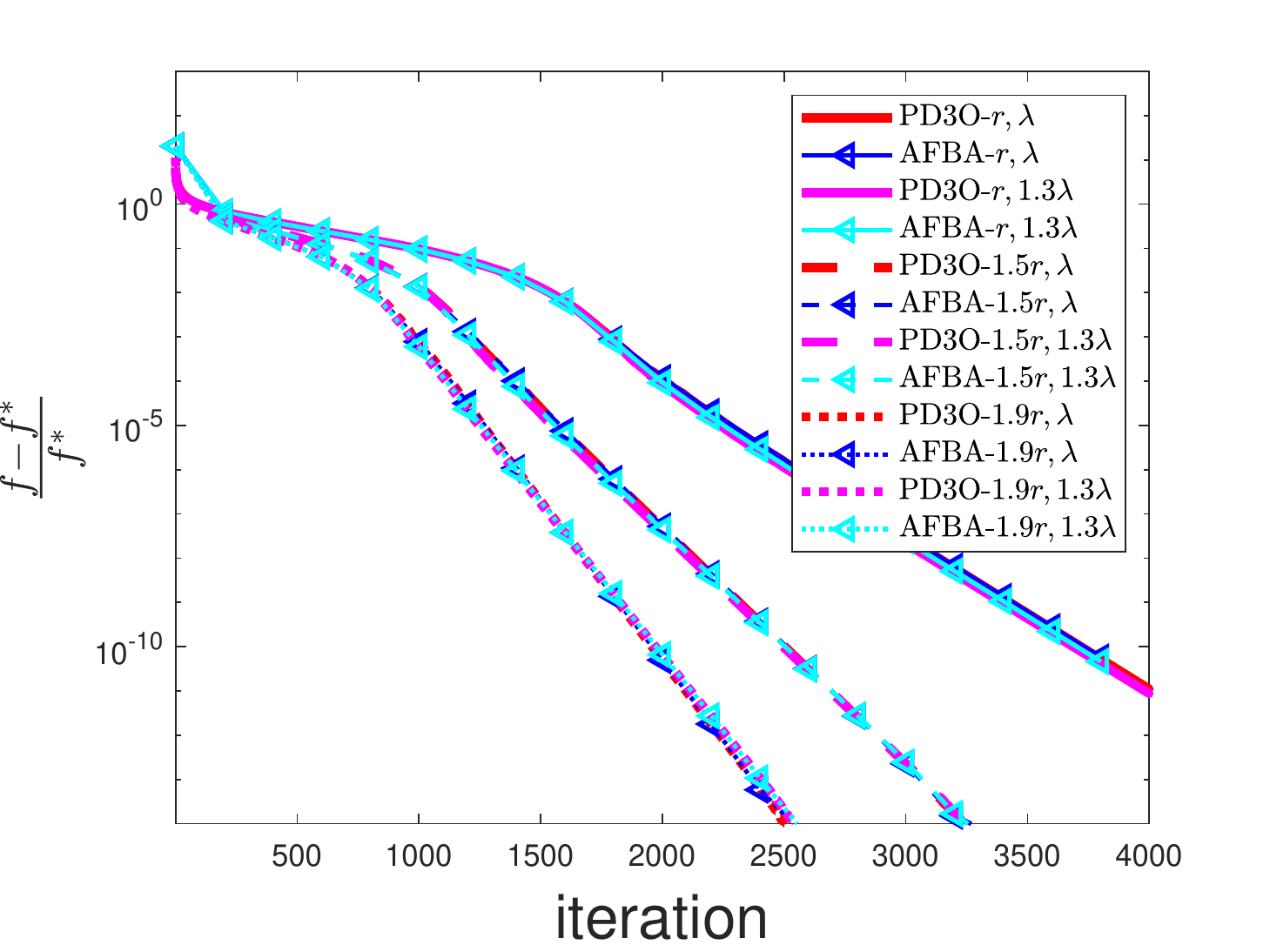}
    \label{fig:flasso_2}
\end{minipage}
\caption{The comparison of the performance of PD3O and AFBA with different parameters. In both figures, the fixed parameters $r$ and $\lambda$ are set to $1/\sigma^2(\vK)$ and $1/\sigma^2(\vB)$, respectively.}
\label{fig:flasso}
\end{figure*}

However, the benefit of larger $\lambda$ appears when $\vK$ has a full column rank. We consider the same fused LASSO problem with  $\vK\in\RR^{2500\times2500}$. We conduct the experiments for two cases, randomly generated $\vK$ and $\vK=\vI.$ The problem setting is changed to $\mu_1 =5$ and $\mu_2=1/5$. The true solution is generated in the same way with $25$ nonzero entries.  As shown in Fig.~\ref{fig:flasso2}, both top and bottom figures indicate $10-20\%$ acceleration of convergence when $\lambda$ is increased.    
The numerical result on the second scenario also suggests that the general constraint on $r$ and $\lambda$ shown in this paper may not be tight. 

\begin{figure*}[!ht]
\centering
\subfloat[Random $\vK\in\RR^{2500\times 2500}$ ]{
\begin{minipage}[t]{0.494\textwidth}
    \centering
\includegraphics[width=1.0\textwidth]{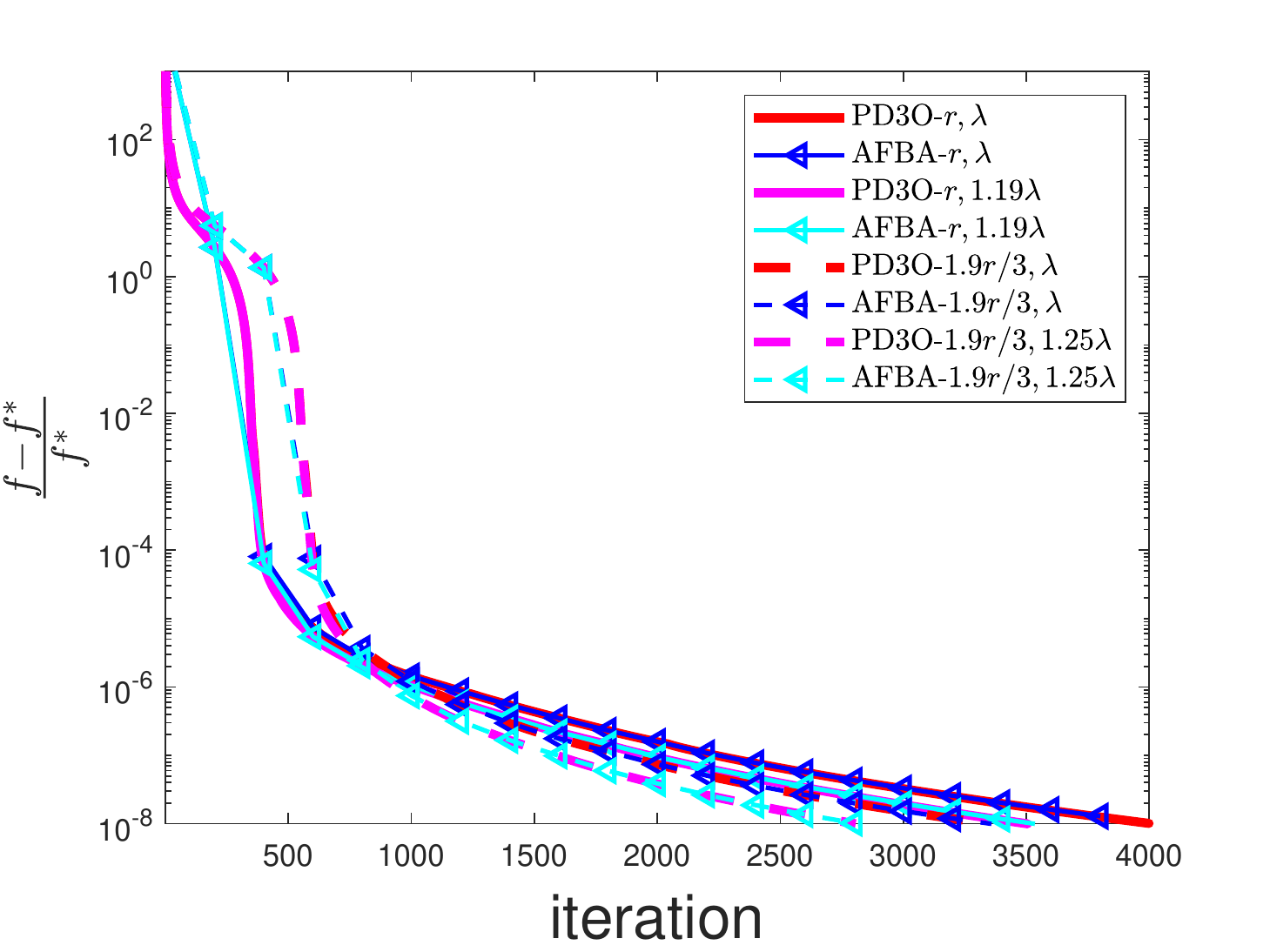}
    \label{fig:flasso2_1}
\end{minipage}
\begin{minipage}[t]{0.494\textwidth}
    \centering
    \includegraphics[width=1.0\textwidth]{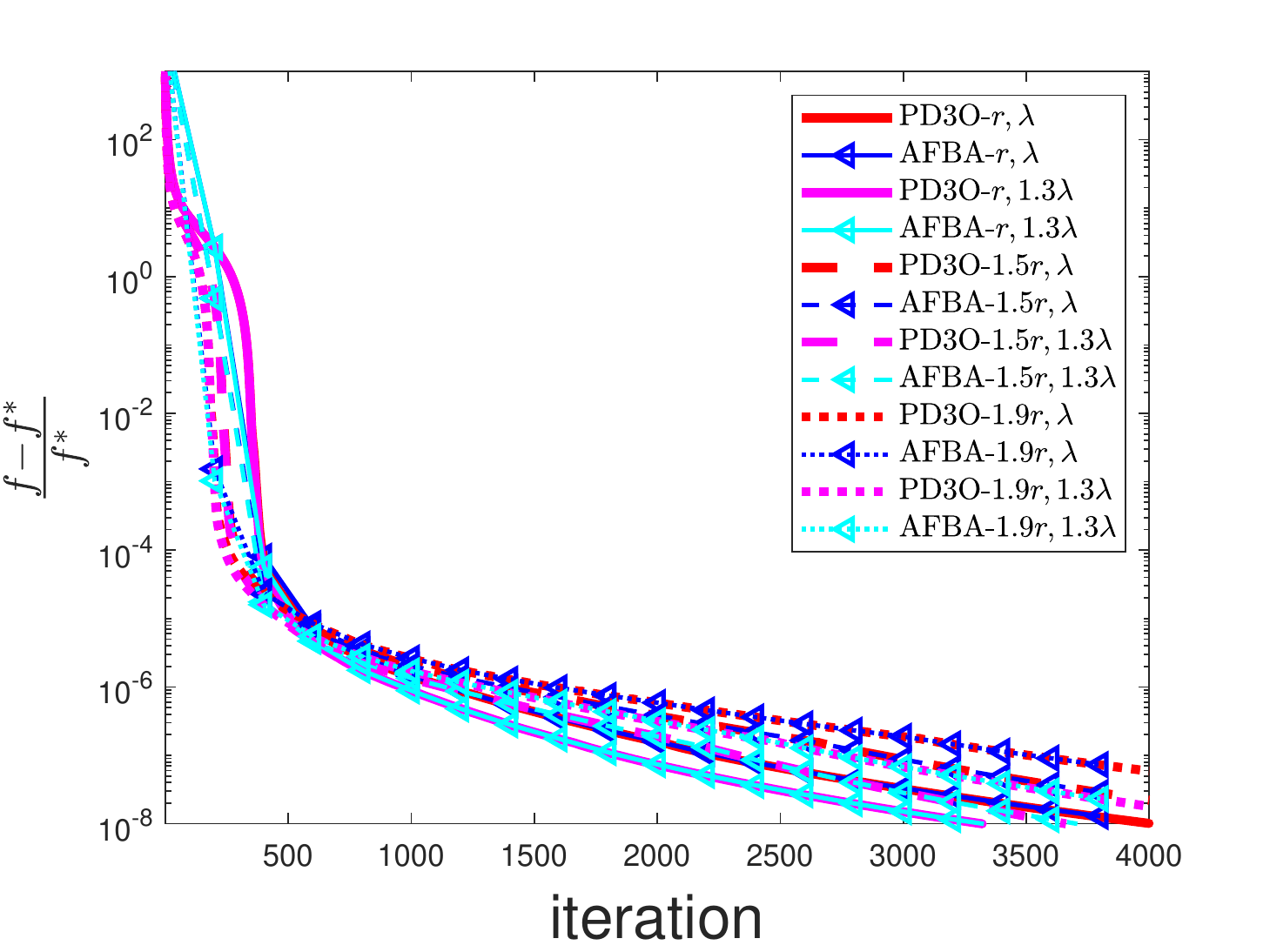}
    \label{fig:flasso2_2}
\end{minipage}
}
\\
\subfloat[$\vK=\vI\in\RR^{2500\times2500}$]{
\begin{minipage}[t]{0.494\textwidth}
    \centering
\includegraphics[width=1.0\textwidth]{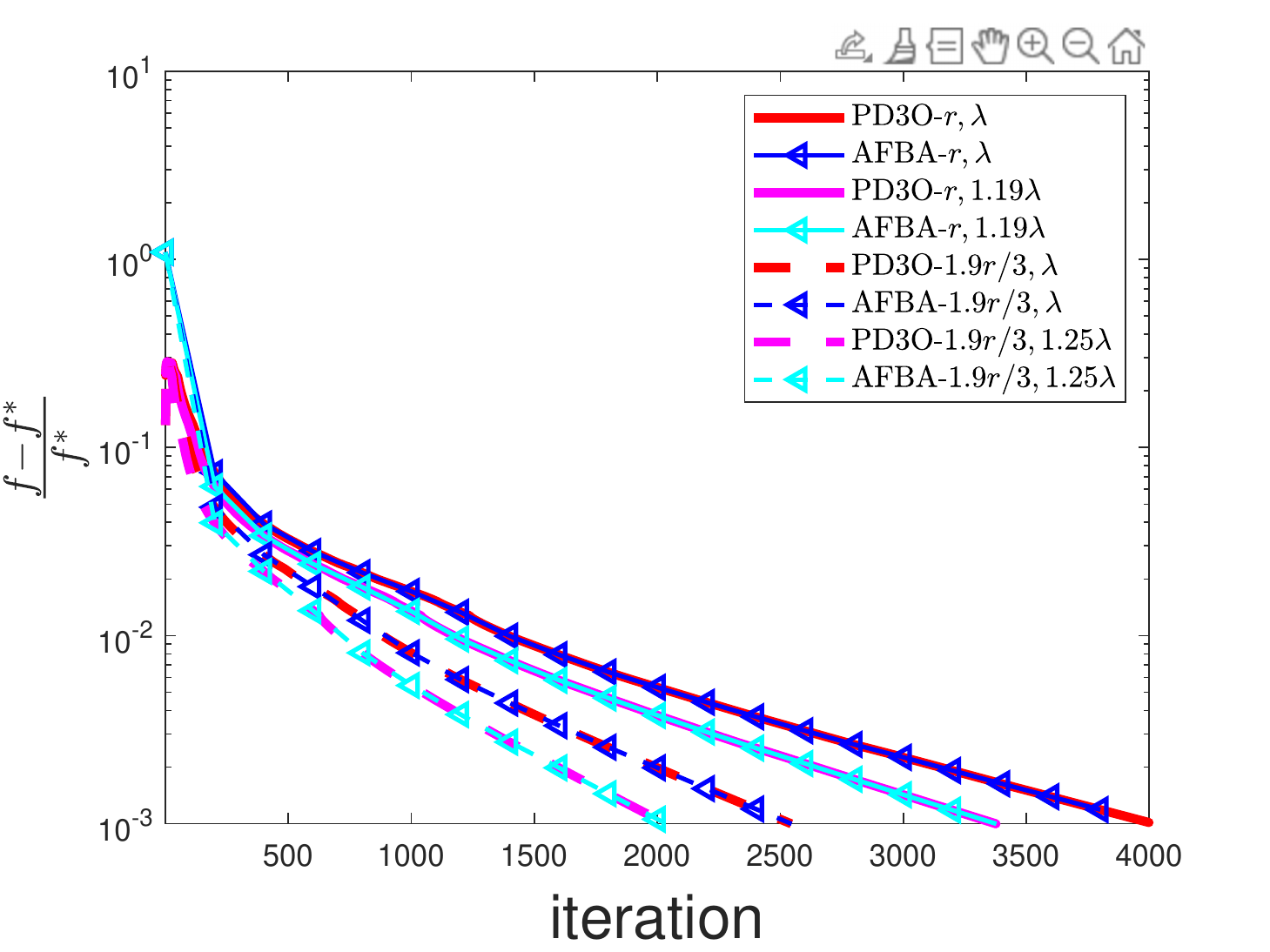}
    \label{fig:flasso2_3}
\end{minipage}
\begin{minipage}[t]{0.494\textwidth}
    \centering
    \includegraphics[width=1.0\textwidth]{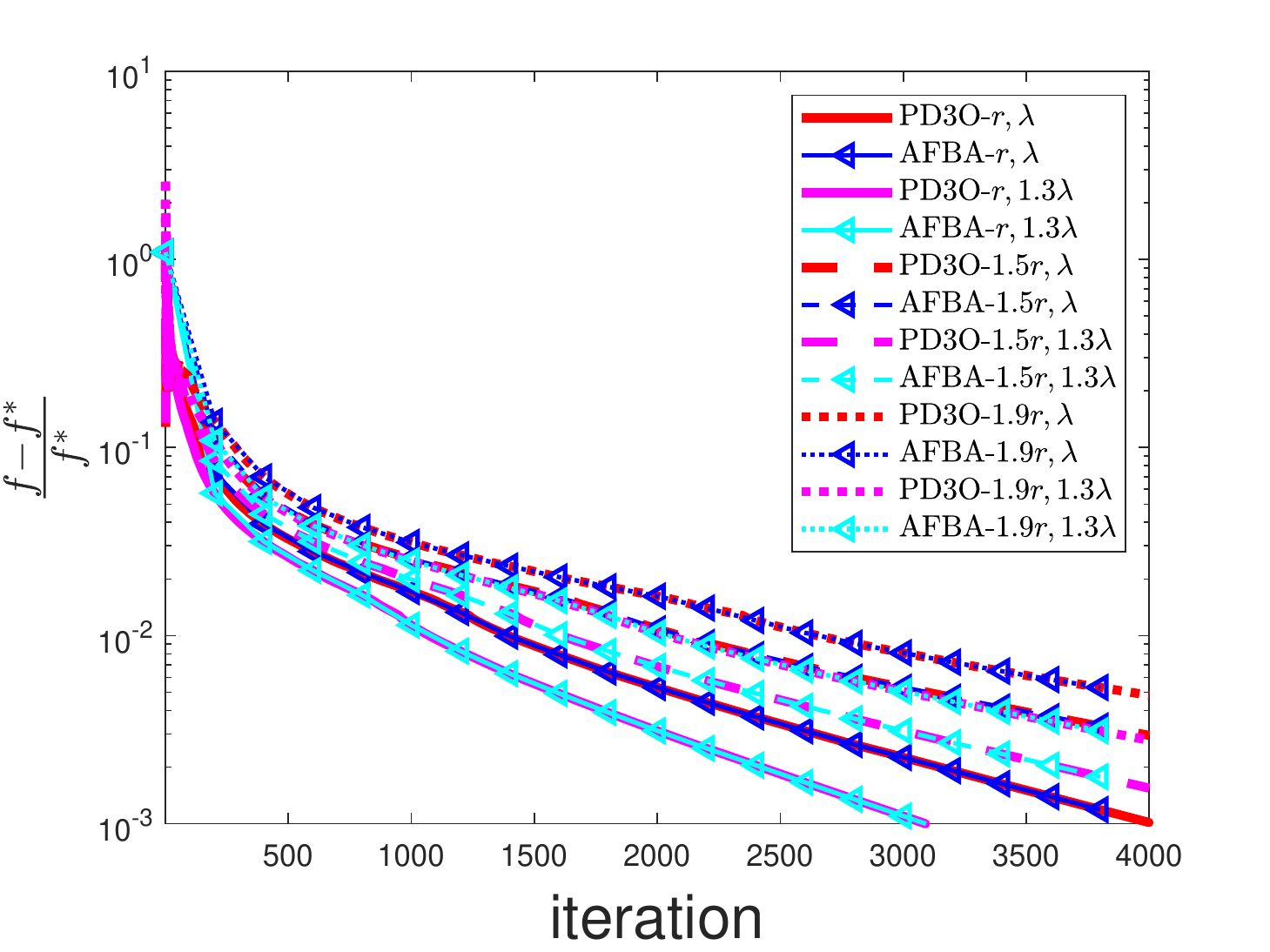}
    \label{fig:flasso2_4}
\end{minipage}
}

\caption{The comparison of the performance of PD3O and AFBA with different parameters. In all figures, the fixed parameters $r$ and $\lambda$ are set to $1/\sigma^2(\vK)$ and $1/\sigma^2(\vB)$, respectively.
}
\label{fig:flasso2}
\end{figure*}




\subsection{LASSO}
We consider the following LASSO problem (see, e.g.,~\cite{tibshirani1996regression})
\begin{equation}
    \Min_{\vx\in\RR^{5000}}\ \frac{1}{2}\|\vK\vx-\vb\|^2+\mu\|\vx\|_1
\end{equation}
where $\vK\in\RR^{500\times 5000}$ and $\mu=200$ is a penalty parameter. We let $g(\vx)=\mu\|\vx\|_1$ and $h(\vK\vx)=\frac{1}{2}\|\vK\vx-\vb\|^2$ and consider the primal-dual form as
\begin{equation}
    \min_{\vx\in\RR^{5000}}\max_{\vs\in\RR^{500}}\ \mu\|\vx\|_1+\dotp{\vK\vx,\vs}-\frac{1}{2}\|\vs\|^2-\dotp{\vb,\vs}.
\end{equation}

The data are generated in a similar way to the previous experiment.  The optimal solution $\vx^*$ is calculated by performing 10,000 iterations of Chambolle-Pock. Fig.~\ref{fig:lasso} shows the effect of the relaxed conditions on the convergence of Chambolle-Pock in terms of two different residual measures. The default dual parameter $\lambda$ is set to  $1/\sigma^2(\vK)$ and the relaxed one is $1.32\lambda$. The primal stepsize $r$ is chosen from $\{0.001, 0.005, 0.01, 0.05\}$. 

\begin{figure*}[!ht]
\centering
\begin{minipage}[t]{0.494\textwidth}
    \centering
\includegraphics[width=1.0\textwidth]{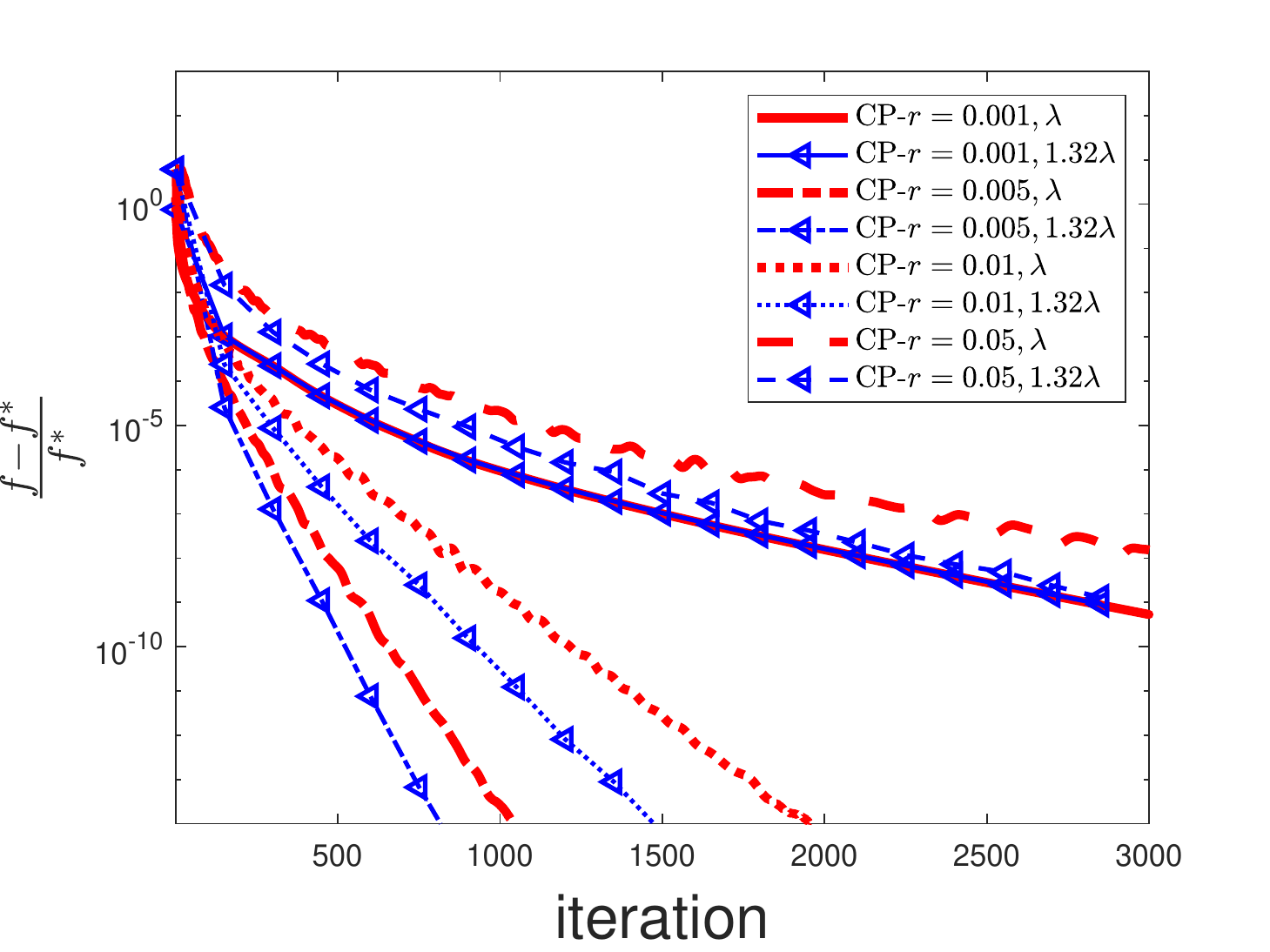}
    \label{fig:lasso_energy}
\end{minipage}
\begin{minipage}[t]{0.494\textwidth}
    \centering
    \includegraphics[width=1.0\textwidth]{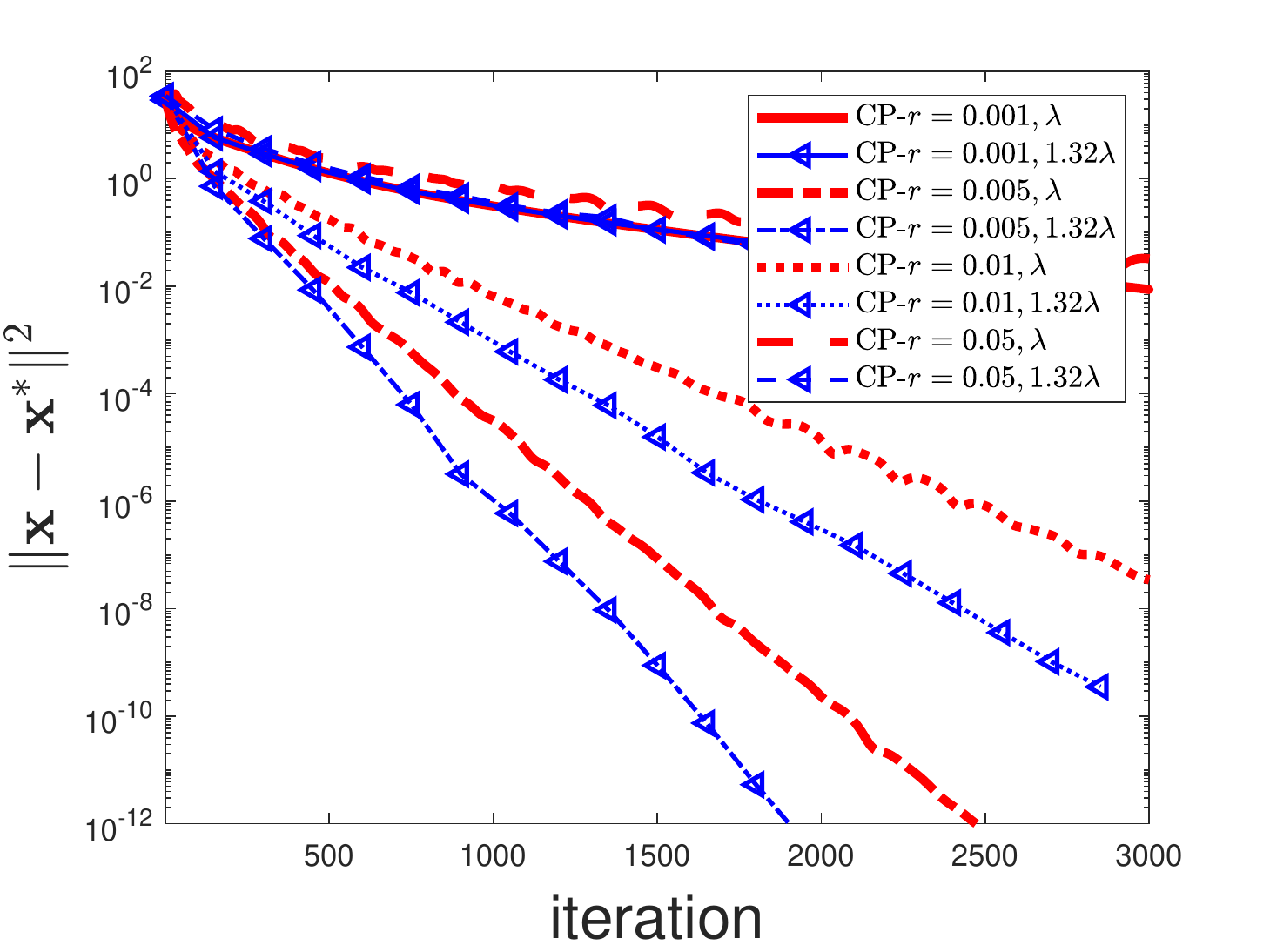}
    \label{fig:lasso_LS}
\end{minipage}
\caption{The comparison of the performance of Chambolle-Pock with different parameters on LASSO in terms of the function values and distance to the optimal solution. In both figures, the fixed parameter $\lambda$ is set to $1/\sigma^2(\vA)$.}
\label{fig:lasso}
\end{figure*}

From Fig.~\ref{fig:lasso}, we observe that by choosing the acceptable range of $r$, the larger value of $\lambda$ makes the algorithm converge faster up to $20-30\%$ acceleration.  We also note that the relaxed $\lambda$ fails to speed up the algorithm and the residual curves overlap when $r$ is set to a very small number. This is possibly due to the small progression of the primal variable at each step. The experiment on extreme values of $r$ verifies the explanation as shown in Fig.~\ref{fig:lasso2}. We use different colors to differentiate the result of the extreme large values from the result of the extreme small values. This figure also suggests that balanced primal and dual stepsizes is needed. Though how to choose good primal and dual stepsizes is out of the focus of this paper, the relaxed condition in this paper provided theoretic guarantee to increase one or both stepsizes.

\begin{figure*}[!ht]
\centering
\begin{minipage}[t]{0.494\textwidth}
    \centering
\includegraphics[width=1.0\textwidth]{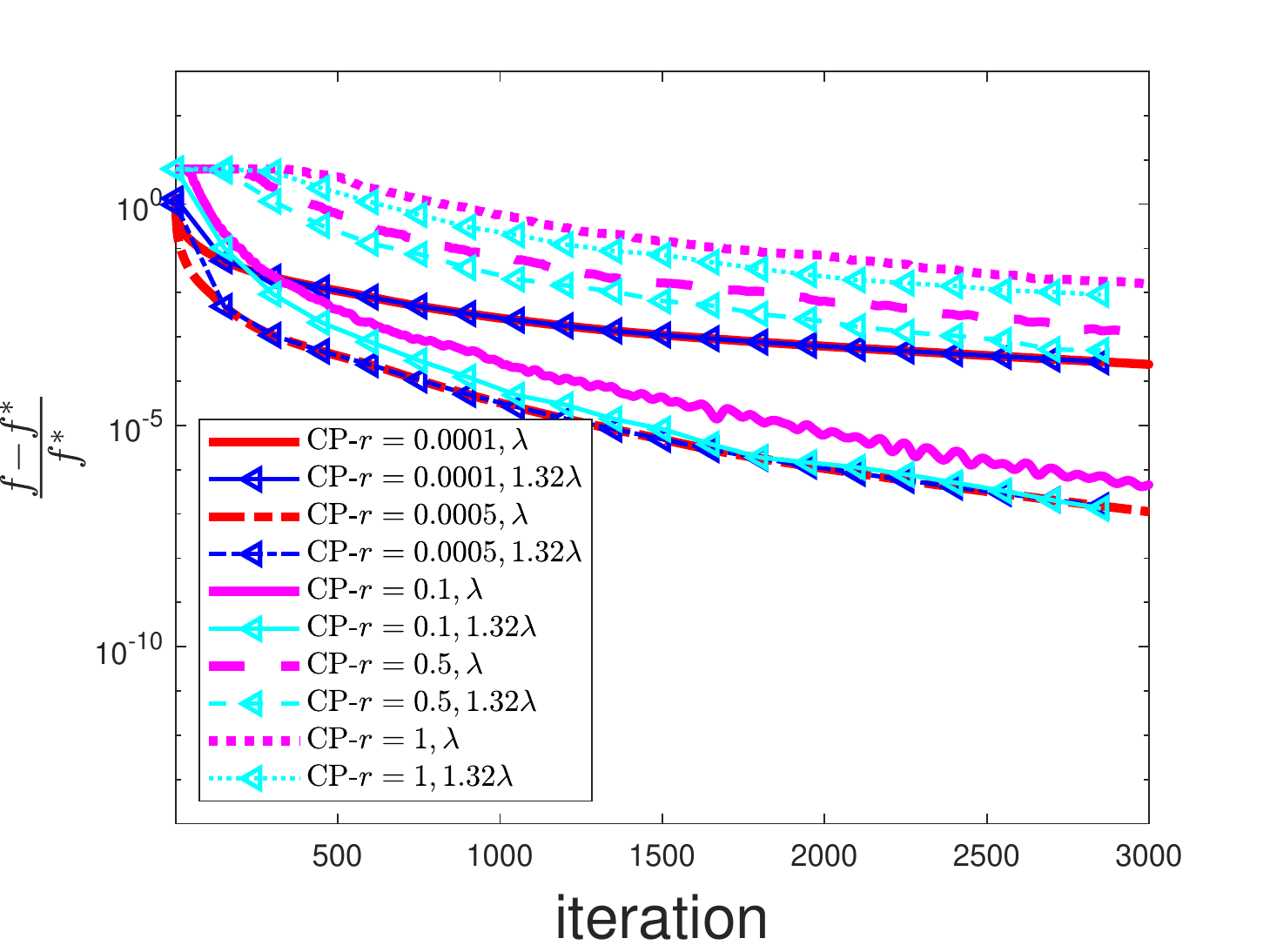}
    \label{fig:lasso_energy2}
\end{minipage}
\begin{minipage}[t]{0.494\textwidth}
    \centering
    \includegraphics[width=1.0\textwidth]{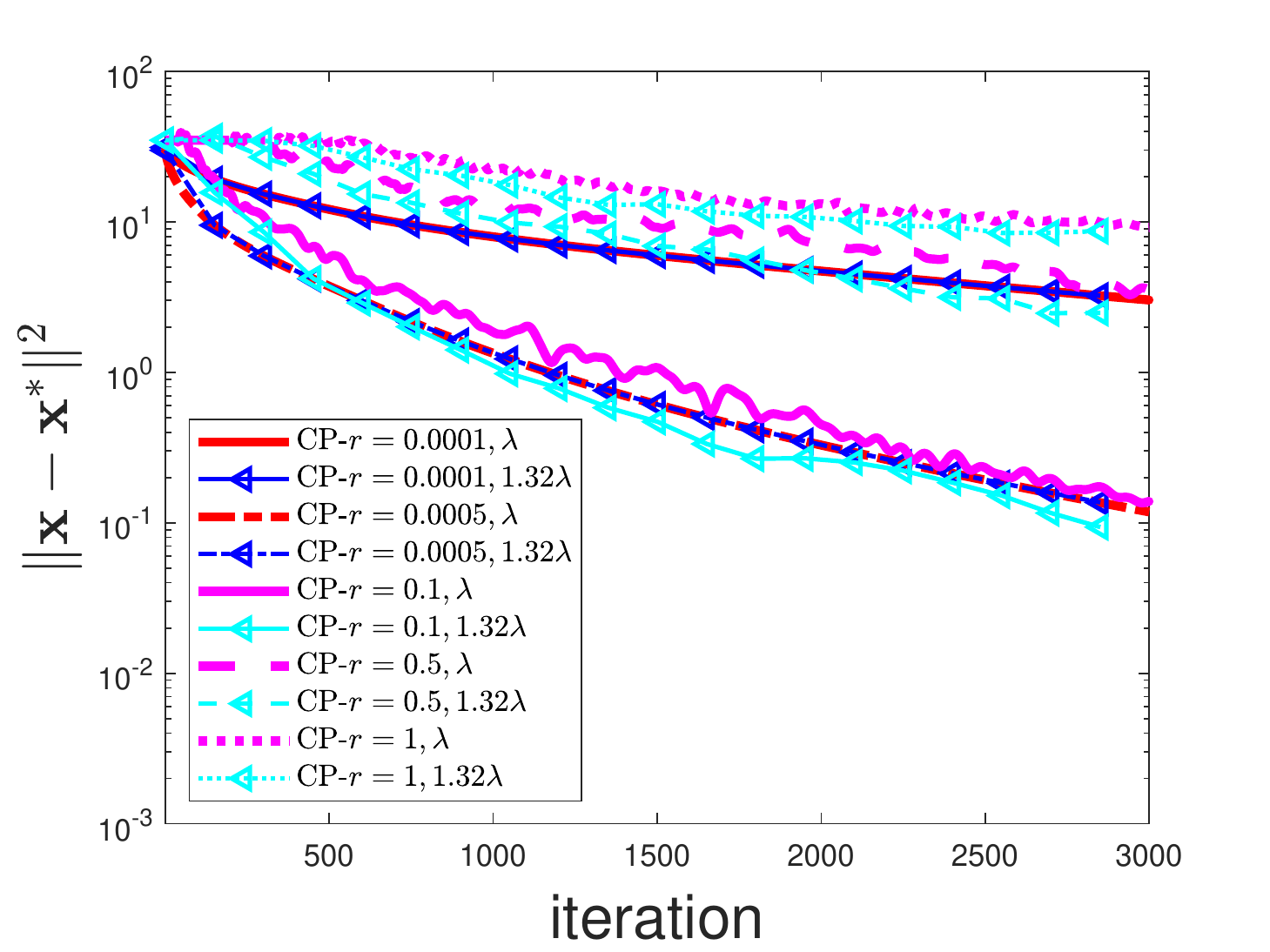}
    \label{fig:lasso_LS2}
\end{minipage}
\caption{The comparison of performance of Chambolle-Pock with different parameters chosen from extreme values. In both figures, the fixed parameter $\lambda$ is set to $1/\sigma^2(\vA)$.}
\label{fig:lasso2}
\end{figure*}

\section{Conclusions}
In this paper, we use a base algorithm to build the connection between some primal-dual algorithms. Then we prove the convergence of the base algorithm under a relaxed condition for the primal and dual stepsizes. It implies a possible choice of larger dual stepsize with a corresponding conservative primal stepsize. Chambolle-Pock, as a special case of the base algorithm, can take the dual stepsize up to $4/3$ of the original one without a compromise on the primal stepsize. The numerical experiments indicates the acceleration for tested algorithms under the relaxed condition. The benefit for Chambolle-Pock is more prominent. The condition for Chambolle-Pock is also proved to be optimal and can not be relaxed. However, how to relax the condition further for the base algorithm is still an open problem. 


\begin{acknowledgements}
This work is partially supported by the NSF grant DMS-2012439.
\end{acknowledgements}

\bibliographystyle{unsrt}
\bibliography{ref}

\end{document}